  \newcolumntype{d}{D{.}{.}{-1}}
\title{An Online Manifold Learning Approach for  Model Reduction of Dynamical Systems\thanks{
This research was supported by the Office of Naval Research.}}
\author{Liqian Peng\thanks{Department of Mechanical and
Aerospace Engineering, and Institute for Networked Autonomous
Systems, University of Florida, Gainesville, FL 32611-6250
(liqianpeng@ufl.edu).}
        \and Kamran Mohseni\thanks{Department of Mechanical and
Aerospace Engineering, Department of Electrical and Computer
Engineering, and Institute for Networked Autonomous Systems,
University of Florida, Gainesville, FL 32611-6250
(mohseni@ufl.edu).}}
\begin{document}

\maketitle
\begin{abstract}
This article discusses a newly developed online manifold learning
method, subspace iteration using reduced models (SIRM), for the
dimensionality reduction of dynamical systems. This method may be
viewed as subspace iteration combined with a model reduction
procedure. Specifically, starting with a test solution, the method
solves a reduced model  to obtain   a more precise solution, and
it repeats this process  until  sufficient accuracy is achieved.
The reduced model is obtained by projecting the full model onto a
subspace that is  spanned by  the dominant modes of an extended
data ensemble. The extended data ensemble in this article contains
not only the  state vectors of some snapshots of the approximate
solution from the previous iteration but also the associated
tangent vectors.   Therefore, the proposed manifold learning
method takes advantage of the information of the original
dynamical system to reduce the dynamics. Moreover, the
learning procedure is computed in the online stage, as opposed to being
computed offline, which is used in many
projection-based model reduction techniques that require prior
calculations or experiments.
After providing an error bound of the classical POD-Galerkin method  in terms of the projection error and the initial condition error, we prove that the  sequence of approximate solutions converge to the actual solution of the original system as long as the vector field of the full model
is locally Lipschitz on an open set that contains the solution
trajectory. Good accuracy of the proposed method has been
demonstrated in two numerical examples, from  a linear
advection-diffusion equation to  a nonlinear Burgers equation.  In order to save
computational cost, the SIRM method is extended to a local model
reduction approach by partitioning  the entire time domain into
several subintervals and obtaining a series of local reduced
models of much lower dimensionality.  The accuracy and efficiency
of the local SIRM are shown through the numerical simulation of
the Navier--Stokes equation in a lid-driven cavity flow problem.
\end{abstract}

\begin{keywords}
online, manifold learning, subspace iteration, model reduction,
local model reduction
\end{keywords}

\begin{AMS}
78M34, 37M99, 65L99, 34C40, 74H15, 37N10
\end{AMS}

\pagestyle{myheadings} \thispagestyle{plain} \markboth{LIQIAN PENG
AND KAMRAN MOHSENI}{AN ONLINE MANIFOLD LEARNING APPROACH FOR MODEL
REDUCTION}

\section{Introduction}
The simulation, control, design, and analysis
of the methods and algorithms for many large-scale dynamical systems
are  often
computationally intensive and  require massive computing resources if at all possible.
  The idea of model reduction is to provide an
efficient computational prototyping tool to replace a high-order
system of differential equations with a system of a substantially
lower dimension, whereby only the most dominant properties of the
full system are preserved. During the past several decades,
several model reduction methods have been studied, such as Krylov
subspace methods~\cite{BaiZ:02a},  balanced truncation
\cite{MooreBC:81a, ScherpenJMA:93a, Marsden:02g}, and proper
orthogonal decomposition (POD)~\cite{MooreBC:81a, LoeveM:55a}.
More techniques can be found in ~\cite{SorensenDC:01a}
and~\cite{AntoulasAC:05a}. These model reduction methods are usually based on offline computations to build the empirical eigenfunctions of the reduced model  before the computation of the reduced state variables. Most of the time these offline computations are as complex as the original simulation.
 For these reasons, an efficient reduced model with high fidelity based
on online manifold learning is preferable.\footnote{To avoid
confusion, ``online'' in this article means that the manifold
learning is carried out in the online stage, as opposed to the
offline stage.} However, much less effort has been expended in the
field of model reduction via online manifold learning.
In ~\cite{RyckelynckD:97a},  an incremental
algorithm involving adaptive periods was proposed. During these adaptive periods the incremental computation is restarted until a
quality criterion is satisfied. In~\cite{MarkovinovicR:06a} and~\cite{RapunM:10a} state
vectors from  previous time steps are extracted to span a linear
subspace in order to construct the reduced model for the next
step. In ~\cite{PetzoldLR:02a} dynamic iteration using
reduced order models (DIRM) combines the idea of the waveform
relaxation technique and model reduction, which simulates each
subsystem that is connected to model reduced versions of the other
subsystems.

A new framework of iterative manifold learning, subspace iteration
using reduced models (SIRM),  is proposed in this article for the
reduced modeling of high-order nonlinear dynamical systems.
Similar to the well-known Picard iteration for solving ODEs, a
trial solution is set at the very beginning. Using POD, a set of updated
empirical eigenfunctions are constructed in each iteration by
extracting dominant modes from an extended data ensemble; then, a
more accurate solution is obtained by solving  the  reduced
equation in a new subspace spanned by these empirical
eigenfunctions. The extended data ensemble contains not only  the
state vectors of some snapshots of the trajectory in the previous
iteration but also the associated tangent vectors. Therefore, the
manifold learning process essentially takes advantage of the
information from the original dynamical system. Both analytical
results and numerical simulations indicate that a sequence of
functions asymptotically converges to the solution of the full
system. Moreover, the aforementioned method can be used to test
(and improve) the accuracy of a trial solution of  other
techniques. A posterior error estimation can be estimated by the
difference between the trial solution and a more precise solution
obtained by SIRM.

The remainder of this article is organized as follows.   Since
algorithms in this article fall in the category of projection
methods, the classic POD-Galerkin method and its ability to
minimize truncation error are briefly reviewed  in section
\ref{sec:background}.  After presenting the SIRM algorithm in
section \ref{sec:SIRM}, we provide
convergence analysis, complexity analysis, and two numerical examples. Then  SIRM
is combined with the time domain partition  in section
\ref{sec:llp}, and
 a local SIRM method is proposed to decrease redundant dimensions. The
performance of this technique is evaluated in a lid-driven cavity
flow problem. Finally, conclusions are offered.

\section{Background of Model Reduction} \label{sec:background}
Let $J=[0, T]$ denote the time domain,  $x: J \to \mathbb{R}^n$ denote the state variable, and  $f: J \times
\mathbb{R}^n \to \mathbb{R}^n$ denote the discretized vector field.
A  dynamical system  in $\mathbb{R}^n$ can be described by an initial value problem
\begin{equation} \label{general}
\dot x = f(t, x); \ \ \ \ \ x(0)=x_0.
\end{equation}
By definition, $x(t)$ is a flow that gives an orbit in $\mathbb{R}^n$
as $t$ varies over $J$ for a fixed $x_0$. The orbit contains a
sequence of states (or state vectors) that follow from $x_0$.

\subsection{Galerkin projection} For a $k$-dimensional linear subspace $S$ in $\mathbb{R}^n$, there
exists an $n \times k$ orthonormal matrix $ \Phi =[\phi_1, ...,
\phi_k] $, the columns of which form a complete basis  of $S$. The
orthonormality of the column matrix requires that $\Phi^T
\Phi=I$, where
 $I$ is an identity matrix. Any
state $x \in \mathbb{R}^n$ can be projected onto $S$ by a linear
projection. The projected state is given by $\Phi^Tx\in
\mathbb{R}^k$ in the subspace coordinate system, where
superscript $T$ denotes the matrix transpose. Let  $P:=\Phi \Phi^T$
denote the projection
 matrix in $\mathbb{R}^n$. Then, the same projection in the original
coordinate system is represented by  $\tilde x(t) :=P x(t)\in
\mathbb{R}^n$.

Let $\Phi^T f(t, \Phi z)$ denote a reduced-order vector field
 formed by  Galerkin projection.  The corresponding
reduced model for $z(t)\in \mathbb{R}^k$ is
\begin{equation} \label{rom}
\dot z  =   \Phi^T f(t, \Phi z); \ \ \ \ \    z_0=\Phi^T x_0.
\end{equation}
 An approximate solution in the original coordinate system   $\hat x(t)=\Phi z(t) \in
\mathbb{R}^n$ is equivalent to the solution of the following ODE:
\begin{equation} \label{rom1}
\dot {\hat x} = P f(t, \hat x); \ \ \ \ \ \hat x_0=P x_0.
\end{equation}

It is well-known that the existence and uniqueness of a solution
for system (\ref{general}) can be proved by the Picard iteration.
\medskip
\begin{lemma}[{\rm Picard--Lindel{\"o}f  existence and
uniqueness}~\cite{MeissJD:07a}] \label {existence} Suppose there
is a closed ball of radius $b$ around a point $x_0 \in
\mathbb{R}^n$ such that $f: J_a \times B_b(x_0) \to \mathbb{R}^n$
is a uniformly Lipschitz function of $x \in B_b(x_0)$ with
constant $K$, and a continuous function of $t$ on $J_a = [0, a]$.
Then the initial value problem \emph{(\ref{general})} has a unique
solution $x(t) \in B_b(x_0)$  for $t \in J_a$, provided that
$a=b/M$, where
\begin{equation} \label{definem}
M = \mathop {\max }\limits_{  (t,x) \in J_a\times B_b(x_0)}
\left\| {f(t,x)} \right\|.
\end{equation}
\end{lemma}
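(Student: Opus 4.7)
The plan is to reformulate the initial value problem as a fixed point problem for the Picard operator and then apply the Banach fixed point theorem on a suitable complete metric space. First, I would observe that a continuous function $x: J_a \to B_b(x_0)$ solves (\ref{general}) if and only if it satisfies the integral equation
\begin{equation*}
x(t) = x_0 + \int_0^t f(s, x(s))\, ds,
\end{equation*}
since $f$ is continuous in $t$ and continuous (being Lipschitz) in $x$ along the curve. Define the Picard operator $T$ by $(Tx)(t) := x_0 + \int_0^t f(s, x(s))\, ds$, and take as the ambient space $X := C(J_a; B_b(x_0))$, the continuous functions from $J_a$ into the closed ball $B_b(x_0)$, equipped with the uniform norm. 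This is a closed subset of the Banach space $C(J_a; \mathbb{R}^n)$, hence complete.

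Next I would verify that $T$ maps $X$ into itself. For any $x \in X$ and any $t \in J_a$, using the bound $M$ from (\ref{definem}),
\begin{equation*}
\| (Tx)(t) - x_0 \| \;\le\; \int_0^t \|f(s, x(s))\| \, ds \;\le\; M t \;\le\; M a \;=\; b,
\end{equation*}
which is precisely why the choice $a = b/M$ is made. Continuity of $Tx$ follows from the standard estimate on the integral, so $Tx \in X$.

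Then I would establish that some iterate $T^n$ is a strict contraction. Using the Lipschitz condition with constant $K$, an induction on $n$ gives
\begin{equation*}
\| (T^n x)(t) - (T^n y)(t) \| \;\le\; \frac{(Kt)^n}{n!} \, \|x - y\|_\infty
\end{equation*}
for all $t \in J_a$, so $\|T^n x - T^n y\|_\infty \le \frac{(Ka)^n}{n!} \|x-y\|_\infty$. Since $(Ka)^n/n! \to 0$, for $n$ large enough $T^n$ is a contraction, so the Banach fixed point theorem on the complete space $X$ yields a unique fixed point $x^* \in X$. This $x^*$ is the unique solution of the integral equation, hence of the IVP (\ref{general}), and by construction takes values in $B_b(x_0)$ throughout $J_a$.

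The main obstacle is the interplay between the two constraints: keeping the iterates inside $B_b(x_0)$ (which forces $a \le b/M$) while also obtaining contractivity (which a priori would force $a < 1/K$). I would circumvent this tension via the induction-on-iterates argument above, which gives contractivity of some $T^n$ without any smallness assumption on $a$; an equivalent alternative would be to reweight the sup norm by $e^{-Lt}$ with $L > K$, turning $T$ itself into a contraction. Either route avoids shrinking $a$ and preserves the clean statement $a = b/M$.
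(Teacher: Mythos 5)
Your proof is correct and is exactly the argument the paper has in mind: the paper states this lemma without proof, citing Meiss and noting only that it ``can be proved by the Picard iteration,'' which is precisely your reformulation as a fixed point of the Picard operator on $C(J_a; B_b(x_0))$ with the choice $a=b/M$ keeping iterates in the ball and the $(Ka)^n/n!$ estimate (or equivalently a weighted norm) giving contractivity of an iterate. No gaps.
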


Similarly, a reduced model formed by Galerkin projection  also has
a unique local solution if the original vector field is Lipschitz.
Moreover, the existence and uniqueness of solutions do not depend
on the projection operator.

\medskip
\begin{lemma} [{\rm  local existence and
uniqueness of reduced models}] \label{localexist} With $a$, $J_a$,
$b$, $B_{b}(x_0)$, $M$, and $f(t,x)$ defined in Lemma
\ref{existence}, the reduced model \emph{(\ref{rom1})}  has a
unique solution $\hat x(t) \in B_b(x_0)$ at the interval $t\in
J_0=[0,a/2]$ for a given initial condition $\hat x(0)=\hat x_0$,
provided that $a=b/M$ and $\left\|\hat x_0-x_0\right\|<b/2$.
 \end{lemma}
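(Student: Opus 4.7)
The plan is to reduce this to an application of the Picard--Lindel\"of lemma (Lemma \ref{existence}), but with the vector field $Pf$ in place of $f$ and with the ball of existence recentered at $\hat x_0$ rather than $x_0$.

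First, I would verify that $Pf$ inherits the hypotheses of Lemma \ref{existence} on $J_a \times B_b(x_0)$. Since $P=\Phi\Phi^T$ is an orthogonal projection we have $\|P\|\le 1$, so for any $(t,x),(t,y)\in J_a\times B_b(x_0)$,
\begin{equation*}
\|Pf(t,x)-Pf(t,y)\| \le \|f(t,x)-f(t,y)\| \le K\|x-y\|,
\end{equation*}
and $\|Pf(t,x)\|\le \|f(t,x)\|\le M$. Continuity of $Pf$ in $t$ is immediate since $P$ is a bounded linear operator. Thus $Pf$ is Lipschitz in $x$ with the same constant $K$ and bounded by the same $M$ on $J_a\times B_b(x_0)$.

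Next I would recenter the ball at $\hat x_0$. Because $\|\hat x_0-x_0\|<b/2$, the triangle inequality yields the inclusion $\overline{B_{b/2}(\hat x_0)}\subset B_b(x_0)$: indeed, any $y$ with $\|y-\hat x_0\|\le b/2$ satisfies $\|y-x_0\|\le \|y-\hat x_0\|+\|\hat x_0-x_0\|<b$. Consequently, the bound $\|Pf\|\le M$ and the Lipschitz estimate with constant $K$ both continue to hold on the smaller set $J_a\times \overline{B_{b/2}(\hat x_0)}$.

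Now I would apply Lemma \ref{existence} to the initial value problem $\dot{\hat x}=Pf(t,\hat x)$, $\hat x(0)=\hat x_0$, with ball radius $b/2$ and velocity bound $M$. The lemma yields a unique solution on the interval $[0,(b/2)/M]=[0,a/2]=J_0$ taking values in $\overline{B_{b/2}(\hat x_0)}\subset B_b(x_0)$, which is exactly the claim.

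The argument is essentially routine once the two observations above are made; the only thing to be careful about is the recentering step, since Picard--Lindel\"of is stated for a ball around the initial condition, not around $x_0$. Using the strict inequality $\|\hat x_0-x_0\|<b/2$ to obtain $\overline{B_{b/2}(\hat x_0)}\subset B_b(x_0)$ is the key technical point, and this is what forces the halving of the existence time from $a$ to $a/2$.
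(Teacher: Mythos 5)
Your proposal is correct and follows essentially the same route as the paper's proof: show that $Pf$ inherits the Lipschitz constant $K$ and the bound $M$ from $f$, use $\|\hat x_0-x_0\|<b/2$ to place $B_{b/2}(\hat x_0)$ inside $B_b(x_0)$, and apply Lemma \ref{existence} to the recentered problem to obtain existence on $[0,(b/2)/M]=J_0$. Your explicit remark about why the existence time halves is a slightly cleaner packaging of the paper's computation of $a_1\ge a/2$, but the argument is the same.
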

 \medskip
\begin{proof}
Since $f(t,x)$ is a uniformly Lipschitz function of $x$ with
constant $K$ for all $(t,x) \in J_a \times B_b(x_0)$,  then
$$
\left\| {f(t, x_1) - f(t, x_2)} \right\| \le K\left\| {x_1 - x_2}
\right\|
$$
for  $x_1, x_2 \in B_b(x_0)$ with $t\in J_a$. Since $P$ is a
 projection matrix, $\left\| P \right\| =1$. As a
consequence,
$$
\left\| P f(t, x_1) - P f(t, x_2) \right\| \le \left\| P \right\|
\left\| {f(t, x_1) - f(t, x_2)} \right\| \le K\left\| {x_1 - x_2}
\right\|,
$$
 which justifies that the projected vector field $P f(t,x)$ is
 also Lipschitz with constant $K$ for the same domain. Since $\left\|\hat
 x_0-x_0\right\|<b/2$, we have $B_{b/2}(\hat x_0)\subset
 B_b(x_0)$.
By  Lemma \ref{existence}, $\dot {\hat x} = P f(t, \hat x)$
  has a uniquely local  solution $\hat x(t)
\in B_{b/2}(\hat x_0)$ for $t\in [0, a_1]  \cap J_a$, where
  $a_1$ is given by
$$
a_1 = \frac{b/2}{{\mathop {\max }\limits_{(t,x) \in {J_0} \times
{B_b}({x_0})} \left\| {\Phi f(t,x)} \right\|}}.
$$
Since $\left\| {\Phi f(t,x)} \right\| \le \left\| {f(t,x)}
\right\|$, we have $a_1 \ge b/2M=a/2$. Therefore, $J_0\subset
[0,a_1] \cap J_a$, and there exists a unique solution $\hat x(t)
\in B_b(x_0)$  for the interval $J_0$. \quad
\end{proof}
\medskip

The error of the reduced model formed by the Galerkin projection
can be defined as $e(t) := \hat x(t) -x(t)$. Let
$e_o(t):=(I_k-P)e(t)$, which denotes the error component
orthogonal to  $S$, and $e_i(t):=Pe(t)$, which denotes the
component of error parallel to $S$ (see Figure
\ref{fig:poderror}). Thus,  we have
\begin{equation}
 e_o(t)=\tilde x(t)-x(t),
 \end{equation} which directly comes from the
projection. However, since the system is evolutionary with time,
further approximations of the projection-based reduced model
result in an additional error $e_i(t)$, and  we have
\begin{equation}
e_i(t)=\hat x(t)-\tilde x(t).
\end{equation} Although $e_i(t)$ and $e_o(t)$ are orthogonal to
each other, they are not independent~\cite{PetzoldLR:03a}.
\begin{figure}
\begin{center}
\begin{minipage}{0.6\linewidth} \begin{center}
\includegraphics[width=1\linewidth]{./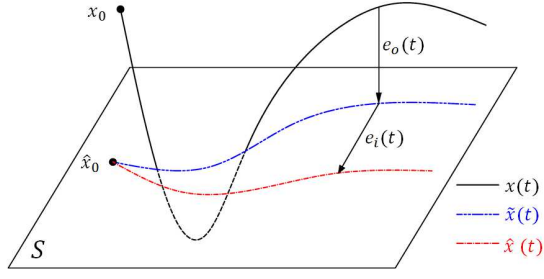}
\end{center} \end{minipage}\\
\caption{   Illustration of the actual solution
$x(t)$ for the original system (\ref{general}), the projected
solution $\tilde x(t)$ on $S$,  and the approximate solution $\hat
x(t)$ computed by the reduced model (\ref{rom1}). The component of
error orthogonal to $S$ is given by $e_o(t)=\tilde x(t)-x(t)$ and
the component of error parallel to $S$ is given by $e_i(t)=\hat
x(t)-\tilde x(t)$. This figure is reproduced from
\cite{PetzoldLR:03a}.} \label{fig:poderror}\vspace{-3mm}
\end{center}
\end{figure}

\medskip
\begin{lemma} \label{bound} Consider the initial value problem \emph{(\ref{general})} over the interval
$J_0=[0,a/2]$. $a$, $J_a$,
 $b$, $B_{b}(x_0)$, $M$, $P$, $x(t)$, $\tilde x(t)$,  $\hat
x(t)$, $e(t)$, $e_o(t)$, and $e_i(t)$ are defined as above. Suppose
$f(t,x)$ is a uniformly Lipschitz function of $x$ with constant
$K$ and a continuous function of $t$  for all $(t, x) \in J_a
\times B_b(x_0)$.  Then the error $e(t)=\hat x(t)-x(t)$ in the
infinity norm for the interval $J_0$ is bounded by
\begin{equation}\label{eeo}
\left\| e\right\|_\infty \le e^{Ka/2}  \left\| e_o \right\|_\infty
+e^{Ka/2} \left\| {e_i(0)} \right\|.
\end{equation}
\end{lemma}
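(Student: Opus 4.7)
The plan is to obtain an integral equation for $e(t)=\hat x(t)-x(t)$ on $J_0$ and then apply Gronwall's inequality, with the key trick being to introduce $Pf(s,x(s))$ as an intermediate term so that the ``pure projection'' part of the error telescopes into $e_o$ rather than staying coupled to $e$.

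First I would integrate both flows. From $\dot{\hat x}=Pf(t,\hat x)$ and $\dot x=f(t,x)$, subtract the two integral forms to write
\[
e(t)=e(0)+\int_{0}^{t}\bigl[Pf(s,\hat x(s))-f(s,x(s))\bigr]\,ds.
\]
Add and subtract $Pf(s,x(s))$ inside the integrand to split it into $P[f(s,\hat x)-f(s,x)]$, which the Lipschitz condition and $\|P\|=1$ will control, and $(P-I)f(s,x(s))$, which depends only on the true trajectory.

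The crucial observation is that $e_o(s)=(P-I)x(s)$ (using $\tilde x=Px$), so $(P-I)f(s,x(s))=(P-I)\dot x(s)=\dot e_o(s)$, and therefore
\[
\int_{0}^{t}(P-I)f(s,x(s))\,ds=e_o(t)-e_o(0).
\]
Combining with $e(0)=e_i(0)+e_o(0)$ yields the clean identity
\[
e(t)=e_i(0)+e_o(t)+\int_{0}^{t}P\bigl[f(s,\hat x(s))-f(s,x(s))\bigr]\,ds.
\]
Taking norms and using $\|P\|=1$ together with the Lipschitz bound $\|f(s,\hat x)-f(s,x)\|\le K\|e(s)\|$ (valid on $B_b(x_0)$, which contains both trajectories by Lemmas \ref{existence} and \ref{localexist}), I obtain
\[
\|e(t)\|\le \|e_i(0)\|+\|e_o\|_{\infty}+K\int_{0}^{t}\|e(s)\|\,ds.
\]

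Finally, Gronwall's inequality applied to this integral inequality with the constant bound $\|e_i(0)\|+\|e_o\|_\infty$ gives $\|e(t)\|\le(\|e_i(0)\|+\|e_o\|_\infty)\,e^{Kt}$, and taking the supremum over $t\in J_0=[0,a/2]$ produces the claimed estimate (\ref{eeo}). The main obstacle is recognizing the telescoping identity for the projection residual; without that step one would end up with an extra integral of $\|e_o\|$ that inflates the bound by a linear-in-$t$ factor rather than the clean exponential coefficient stated in the lemma.
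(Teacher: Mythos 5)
Your proof is correct, and it reaches the stated bound by a somewhat different route than the paper. The paper splits $e=e_i+e_o$ and works with the inner component alone: left-multiplying the error equation by $P$ kills the projection residual $(I-P)f(t,x)$ outright and yields the differential inequality $\frac{d}{dt}\|e_i\|\le K\|e_i\|+K\|e_o\|$, after which Gronwall (with the source term $K\int_0^t\|e_o\|\,d\tau$ carried along) gives $\|e_i(t)\|\le (e^{Ka/2}-1)\|e_o\|_\infty+e^{Ka/2}\|e_i(0)\|$ and the triangle inequality $\|e\|_\infty\le\|e_i\|_\infty+\|e_o\|_\infty$ finishes. You instead keep the full error $e$ and neutralize the residual $(P-I)f(s,x)$ by recognizing it as $\dot e_o(s)$, so it telescopes to $e_o(t)-e_o(0)$ and combines with $e(0)$ to give $e_i(0)$ exactly; Gronwall then applies to $\|e\|$ directly with the constant $\|e_i(0)\|+\|e_o\|_\infty$. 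Both arguments rest on the same three ingredients ($\|P\|=1$, the Lipschitz bound valid because both trajectories stay in $B_b(x_0)$ by Lemmas \ref{existence} and \ref{localexist}, and Gronwall), and the final constants coincide: integrating the paper's linear-in-$t$ source through Gronwall produces $(e^{Ka/2}-1)\|e_o\|_\infty$, which after adding back $\|e_o\|_\infty$ is exactly your $e^{Ka/2}\|e_o\|_\infty$. So your closing remark that carrying the integral of $\|e_o\|$ would ``inflate'' the bound is not quite right --- the paper does carry it and loses nothing --- but your telescoping identity is genuinely needed for your route, since without it the direct estimate on $e$ leaves an uncontrolled $\int_0^t\|(I-P)f(s,x)\|\,ds$ term. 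Your version is arguably cleaner in that it produces the coefficient $e^{Ka/2}$ in one step and makes transparent why $e_i(0)$ rather than $e(0)$ appears in the bound.
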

\begin{proof}
Since $f(t,x)$  is a uniformly Lipschitz function for any
$(t,x)\in J_a \times B_b(x_0)$, Lemmas \ref{existence} and
\ref{localexist} respectively imply the unique existences of
$x(t)\in B_b(x_0)$ and  $\hat x(t)\in B_b(x_0)$. Moreover, we can
uniquely determine $\tilde x(t)\in B_b(x_0)$ by $\tilde x(t)=P
x(t)$. Therefore, $x(t)$, $\tilde x(t)$, and $\hat x(t)$ are all
well-defined for any $t\in J_0$.

Substituting (\ref{general}) and (\ref{rom1}) into the
differentiation of $e_o(t)+ e_i(t) = \hat x(t) - x(t)$ yields
\begin{equation}\label{ee}
\dot e_o + \dot e_i = Pf(t,\hat x) - f(t,x).
\end{equation}
Left multiplying (\ref{ee})  by $P$, expanding $\hat x$, and
recognizing that $P^2 = P$ gives
$$\label{ei}
 \dot e_i(t) = P(f(t, x + e_o+ e_i) -f(t,x)).
$$
 Using this equation by expanding  $\left \|e_i(t+h) \right \|$ and  applying triangular inequality yields
$$
\begin{array}{l}
 \left\| {{e_i}(t + h)} \right\| = \left\| {{e_i}(t) + hPf(t,x + {e_o} + {e_i}) - hPf(t,x)} \right\| + \mathcal{O}({h^2})
 \smallskip \\
 \ \ \ \ \ \ \ \ \ \ \ \ \ \ \  \le \left\| {{e_i}(t)} \right\| + h\left\| {Pf(t,x + {e_o} + {e_i}) - Pf(t,x + {e_o})} \right\| \smallskip \\
\ \ \ \ \ \ \ \ \ \ \ \ \ \ \ \ \ \ \ \ \ \ \ + h\left\| {Pf(t,x +
{e_o}) - Pf(t,x)} \right\| + \mathcal{O}({h^2}).
 \end{array}
$$
Rearranging this inequality and applying the  Lipschitz conditions
gives
$$
\frac{{\left\| {{e_i}(t + h)} \right\| - \left\| {{e_i}(t)}
\right\|}}{h} \le K \left\| {{e_i}(t)} \right\| + K \left\|
{{e_o}(t)} \right\| + \mathcal{O}(h).
$$
Since $\mathcal{O}(h)$ can be uniformly bounded independent of
$e_i(t)$, using the mean value theorem and letting $h \to 0$ give
$$
\frac{{\rm{d}}}{{{\rm{d}}t}}\left\| {{e_i}(t)} \right\| \le K\left\| {{e_i}(t)} \right\| + K\left\| {{e_o}(t)} \right\|.
$$
Rewriting the above inequality into integral form, $\left\| {{e_i}(t)} \right\| \le \alpha (t) + K \int_0^t {\left\| {{e_i}(\tau )} \right\|{\rm{d}}\tau }$, where $\alpha (t): = \left\| {{e_i}(0)} \right\| + K\int_0^t {\left\| {{e_o}(\tau )} \right\|{\rm{d}}\tau }$, and using Gronwall's
lemma, we obtain
$$
\left\| {{e_i}(t)} \right\| \le \alpha (t) + \int_0^t {\alpha (s)K\exp \left( {\int_s^t {Kd\tau } } \right){\rm{ds}}}.
$$
By definition, ${\left\| {{e_o}} \right\|_ \infty} \ge \left\|
{{e_o}(t)} \right\|$ for any $t \in J_0$. It follows that $\alpha (t) \le \left\| {{e_i}(0)} \right\| + Kt\left\| {{e_o}} \right\|_\infty$. Simplifying the integral of the right-hand side of the above inequality gives
$$
 {\left\| {{e_i}}(t) \right\|} \le (e^{Ka/2}-1){\left\| {{e_o}} \right\|_\infty}+  e^{Ka/2} \left\| {{e_i}(0)} \right\|,
$$
for any $t\in J_0$.
Combining the above inequality with $\left\| {e} \right\|_\infty
\le \left\| {e_i } \right\|_\infty + \left\| {e_o }
\right\|_\infty$, one can obtain  (\ref{eeo}).
 \quad
\end{proof}
\medskip

\textbf{Remark:} The above lemma provides a bound for $\|e_i(t)\|$ in terms of $\|e_o\|_\infty$ and $\|e_i(0)\|$.
We have $\left\|e_i(0)\right \|=0$ when the
initial condition of the reduced model is given by $\hat x_0=P
x_0$ for  (\ref{rom1}). In this situation, (\ref{eeo})
becomes ${\left\| {{e}} \right\|_\infty} \le e^{K a/2} {\left\|
{{e_o}} \right\|_\infty}$. Considering $\left\| {e}
\right\|_\infty \ge \left\| {e_o} \right\|_\infty$, $\left\| {e}
\right\|_\infty=0 $ holds if and only if $\left\| {e_o}
\right\|_\infty=0 $.

Obviously,  $J_0$  is not the maximal  time interval of the
existence and uniqueness of  $x(t)$ and $\hat x(t)$. For
convenience, we simply assume that $x(t)$ and $\hat x (t)$
globally exist on $J=[0,T]$ throughout the rest of this article.
Otherwise, we can shrink $J$ to a smaller interval, which starts
from $0$, such that both $x(t)$ and $\hat x(t)$ are well-defined
on $J$. Let $\mathcal{D}$ be an open set that contains $x(t)$,
$\tilde x(t)$, and $\hat x(t)$ for all $t\in J$. Under this
assumption, Lemma \ref{bound} is still valid if $J_0$ and
$B_{b}(x_0)$ are substituted by $J$ and $\mathcal{D}$,
respectively.
\medskip

\subsection{POD}
In order to provide an accurate description for the original
system, the POD method can be used to deliver a set of empirical eigenfunctions
such that the error   for representing the given data onto the
spanned subspace is optimal, in a  least squares  sense
\cite{HolmesP:02a}. Assume that $m$ precomputed snapshots
  form a matrix, $X:=[x(t_1),..., x(t_m)]$. Then, the truncated singular value decomposition (SVD)
\begin{equation}
X \approx \Phi \Lambda \Psi^T
\end{equation}
provides  the POD basis matrix $\Phi\in \mathbb{R}^{n\times k}$, where $\Lambda \in \mathbb{R}^{k
\times k}$ is a diagonal matrix that consists of the first $k$ nonnegative singular values arranged in decreasing order.
  $P$ is then obtained by $\Phi \Phi^T$.

Let $E$  denote  the energy of the full system, which is   approximated by  the square of the Frobenius norm of snapshot matrix $X$, $E=\int_{{0}}^{{T}} {{{\left\| {x(t)} \right\|}^2} \: \mathrm{d} t} \approx \|X\|_F^2 =\sum\nolimits_{\alpha = 1}^r {\lambda _\alpha^2}$, where $r=\min(n,m)$. Let $E'$ denote the energy  in the optimal $k$-dimensional subspace, $E'=\int_{{0}}^{{T}} {{{\left\| {{P}x(t)} \right\|}^2} \: \mathrm{d} t}  \approx \|P X\|_F^2= \sum\nolimits_{\alpha = 1}^k {\lambda _\alpha^2} $. A criterion can be set to limit the approximation error in the
energy by a certain fraction $\eta$. Then, we seek $k \ll r$ so
that
\begin{equation}\label{num}
{{E'} \mathord{\left/
 {\vphantom {{E'} E}} \right.
 \kern-\nulldelimiterspace} E} > \eta.
\end{equation}

The key for POD and other projection-based reduced models is to
find a  subspace where all the state
vectors approximately reside.  Although these methods can significantly
increase the computational speed during the online stage,  the
cost of data ensemble construction in the offline stage is often
very expensive.  For these
reasons, developing an inexpensive online  manifold learning
technique is a desirable objective.

\section{SIRM} \label{sec:SIRM}
The SIRM method is introduced by combining subspace iteration with
a model reduction procedure in this section. The idea of subspace
construction is to enhance the POD method by feeding it with
information drawn from the observed state of the system and its
time derivation. Then, a more precise solution  is solved by
projecting the original system onto this subspace. The subspace
construction is carried out iteratively until a convergent
solution is achieved.
\subsection{Algorithm of SIRM}
 In this article, a $k$-dimensional
subspace $S$ is called \emph{invariant} of $x(t)$ (or invariant
for short) if $x(t) \in S$ for all $t\in J$. In this case,
$P x(t) = x(t)$,
  which means that $P$ is an invariant projection operator on the trajectory and that $\tilde x(t)=x(t)$.  As mentioned above, $e_o(t)=0$
  holds if and only if  $e(t)=0$. Then, $\hat x(t)=x(t)$.
Inserting (\ref{general}) and (\ref{rom1}) into $\dot{\hat
x}(t)=\dot x(t)$, one can achieve
$  P f(t,x) = f(t,x)$,
which is equivalent to  $f(t, x) \in S$. In fact, $(x(t),f(t,x))$
can be considered  a point in the tangent bundle $TS$, which
coincides with the Cartesian product of $S$ with itself. As an invariant projection, $P$ preserves
not only the state vectors along the solution orbit but also
 the associated tangent vectors, i.e., the dynamics.

On the $j$th iteration,  the aim is to construct a subspace $S^j$ such
that both  $\hat x^{j-1}$ and $f(t, \hat x^{j-1})$ are invariant
under the associated projection operator $P^j$, i.e.,
\begin{equation} \label{identityjj}
P^j (\hat x^{j-1}) = \hat x^{j-1} ,
\end{equation}
\begin{equation} \label{identityj1}
 P^j (f(t,\hat x^{j-1})) = f(t, \hat x^{j-1}).
\end{equation}
Thus, both $\hat x^{j-1} (t)$ and $f(t,\hat x^{j-1} (t))$ reside
in $S^j$ for all $t\in J=[0,T]$.

If the solution orbit is given at discrete times $t_1, ..., t_m$,
then we have an $n\times m$ state matrix
\begin{equation}
 \hat X^{j}:=[\hat x^{j}(t_1),...,\hat x^{j}(t_m)].
\end{equation}
 Accordingly, the samples of tangent vectors along the approximating orbit can form another $n \times m$
matrix,
\begin{equation} \label{orbit}
 \hat F^{j}:=[f(t_1, \hat x^{j}(t_1)),...,f(t_m, \hat x^{j}(t_m))].
\end{equation}
 A combination of $\hat
X^j$ and $\hat F^j$ gives an \emph{information matrix}, which is
used to represent  an extended data ensemble
\begin{equation} \label{infomatrix}
\hat Y^j:=[\hat X^j, \gamma  \hat F^j],
\end{equation}
where $\gamma$ is a weighting coefficient.  The basis vectors of
$S^j$ can be obtained by using  SVD of $\hat Y^{j-1}$.
$\gamma=1$ is a typical value that is used to balance the
truncation error of $\hat X^j$ and $\hat F^j$.  It can be noted
that a large  $m$ value will lead to intensive computation, but
the selected snapshots should reflect the main dimensions of
states and tangent vectors along the solution trajectory. When the
width of each time subinterval (partitioned by $t_i$) approaches
zero, $S^j$ can be given by the column space of  $\hat Y^{j-1}$.

\begin{algorithm}
\caption{SIRM}
\label{alg:SIRM}
\begin{algorithmic}
\REQUIRE The initial value problem (\ref{general}). \ENSURE An
approximate solution $\hat x(t)$. \STATE Set a test function $\hat
x^0(t)$ as the trial solution. Initialize the iteration number
$j=0$.
 \REPEAT
 \STATE 1: Update the iteration number $j=j+1$.
 \STATE 2: Assemble snapshots of an approximate solution $\hat x^{j-1}(t)$ into
  matrix form $\hat X^{j-1}$.
 \STATE 3: Compute vector
 field matrix $\hat F^{j-1}$ associated with snapshots in $\hat X^{j-1}$.
 \STATE 4: Form an information matrix for the extended data ensemble  $\hat Y^{j-1}=[\hat X^{j-1}, \gamma \hat
 F^{j-1}]$.
 \STATE 5: Based on $\hat Y^{j-1}$, compute the empirical eigenfunctions $\Phi^j$ through POD.
 \STATE 6: Project the original equation onto  a linear subspace spanned by
 $\Phi^j$ and form a reduced model.
 \STATE 7: Solve the reduced model and obtain an approximate solution $z^j(t)$ in the
subspace coordinate system.
 \STATE 8: Express the updated solution in the original coordinate system $\hat x^j(t)=\Phi^j z^j(t)$.
  \UNTIL{$\left \| \hat x^{j}- \hat x^{j-1} \right \|_{\infty}<
    \epsilon$, where $\epsilon$ is the error tolerance.
    \STATE Obtain the final approximate solution $\hat x(t)=\hat x^j(t)$}.
\end{algorithmic}
\end{algorithm}
Algorithm \ref{alg:SIRM} lists the comprehensive procedures of the
SIRM method. A new subspace $S^j$ is constructed in each
iteration, followed by an approximate solution $\hat x^{j-1}(t)$.
As $\hat x^j(t) \to x(t)$, $S^j$ approaches an invariant subspace.
For this reason, SIRM is an iterative manifold learning procedure,
which approximates an invariant subspace by a sequence of
subspaces. A complete iteration cycle  begins with a collection of
snapshots from the previous iteration (or an initial test
function).  Then,  a subspace spanned by an information matrix is
constructed. Empirical eigenfunctions are generated by POD, and
finally  a reduced-order equation obtained by Galerkin projection
(\ref{rom}) is solved.

\subsection{Convergence Analysis} \label{sec:analysis}
In this subsection, we first provide  a local error bound for the
sequence of approximate solutions $\{\hat x^j(t)\}$ obtained by
SIRM, which paves the way for the proof of local  and global
convergence of the sequence.

 It can be noted that  both $x^{j-1}(t) \in S^j$ and $f(t,x^{j-1})\in  S^j$
hold for all $t\in J_0$ only in an ideal situation. If $S^j$ is
formed by extracting the first few dominant modes from the
information matrix of the extended data ensemble
(\ref{infomatrix}), neither (\ref{identityjj}) nor
(\ref{identityj1}) can be exactly satisfied.
Let $\varepsilon^j$  quantify the projection error,
\begin{equation} \label{epi}
\varepsilon^j : =  \int_{{0}}^{a/2} {{{\left\| {\left( {I - {P^j}}
\right) {{\hat x}^{j - 1}}(\tau)} \right\|}^2} \: \mathrm{d} \tau
+ } \gamma^2 \int_{{0}}^{a/2} {{{\left\|
 {\left( {I - {P^j}} \right)f(\tau ,{{\hat x}^{j - 1}})} \right\|}^2} \: \mathrm{d} \tau
}.
\end{equation}
If  SVD is used to construct the empirical eigenfunctions,
$\varepsilon^j$ can be estimated by
\begin{equation}
\varepsilon^j  \approx \sum\limits_{\alpha = k^j+1}^{r} {(\lambda
_\alpha^j)^2},
\end{equation}
where  $\lambda_\alpha^j$ is the $\alpha$th singular value of the
information matrix $\hat Y^{j-1}$, and $k^j$ is the adaptive
dimension of $S^j$  such that the truncation error  produced by SVD is
bounded by $\varepsilon^j \le \varepsilon$. The following lemma
gives an error bound for the limit of the sequence $\{\hat
x^j(t)\}$.

\medskip
\begin{lemma}\label{proposition}
Consider solving the initial value problem \emph{(\ref{general})}
over the interval $J_0=[0,a/2]$ by the SIRM method.  $a$, $J_a$,
$b$, $B_{b}(x_0)$, $M$, $P$, $x(t)$, $\tilde x(t)$,  $\hat x(t)$,
$e(t)$, $e_o(t)$, and $e_i(t)$ are defined as above. The
superscript $j$ denotes the $j$th iteration. Suppose $f(t,x)$ is a
uniformly Lipschitz function of $x$ with constant $K$ and a
continuous function of $t$  for all $(t, x) \in J_a \times
B_b(x_0)$.  Then $\{\hat x^j(t)\}$   approaches  $x(t)$ with an
upper bound of $\left\|e\right\|_\infty$ given by
\begin{equation} \label{chi}
\chi  = \frac{{\sqrt{a\varepsilon} {e^{Ka/2}}}}{{{\sqrt{2}\gamma
}(1 - Ka{e^{Ka/2}}/2)}} + \frac{2\theta e^{Ka/2}}{{1 -
Ka{e^{Ka/2}}/2}}
\end{equation}
for all $t \in J_0$ provided that
\begin{equation}\label{limita}
a<\min\left [\frac {b}{M},  \frac{2e^{-Kb/2M}}  {K} \right ],
\end{equation}
where $\theta$ is the maximal error of the initial states in
reduced models, and $\theta<b/2$.
\end{lemma}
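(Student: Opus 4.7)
The plan is to establish a contractive recursion of the form $\|e^j\|_\infty \le \mu \|e^{j-1}\|_\infty + C$, where $e^j(t) := \hat x^j(t) - x(t)$, and then pass to $\limsup_{j\to\infty}$ to get the fixed-point bound $C/(1-\mu) = \chi$. The hypothesis \eqref{limita} is exactly what makes $\mu := Ka\,e^{Ka/2}/2 < 1$: from $a < 2e^{-Kb/(2M)}/K$ we get $Ka/2 < e^{-Kb/(2M)}$, and from $a < b/M$ we get $e^{Ka/2} < e^{Kb/(2M)}$, so the product $\mu$ is strictly below~$1$.

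To obtain the recursion I would start from the error equation for iteration $j$. Subtracting $\dot x = f(t,x)$ from $\dot{\hat x}^j = P^j f(t,\hat x^j)$ gives
\[
\dot e^j = P^j\bigl(f(t,\hat x^j) - f(t,x)\bigr) - (I-P^j) f(t,x).
\]
Since $P^j$ is an orthogonal projection and $f$ is $K$-Lipschitz in $x$, this yields $\|\dot e^j(t)\| \le K\|e^j(t)\| + \|(I-P^j)f(t,x)\|$. Gronwall's inequality on $[0,a/2]$ together with Cauchy--Schwarz then gives
\[
\|e^j\|_\infty \;\le\; e^{Ka/2}\|e^j(0)\| \;+\; e^{Ka/2}\sqrt{a/2}\;\bigl\|(I-P^j)f(\cdot,x)\bigr\|_{L^2[0,a/2]}.
\]
To bring in $\varepsilon$, I would insert $\hat x^{j-1}$ as an intermediary,
\[
\|(I-P^j)f(s,x)\| \le \|(I-P^j)f(s,\hat x^{j-1})\| + K\|e^{j-1}(s)\|,
\]
take $L^2[0,a/2]$ norms, and invoke the stopping criterion $\varepsilon^j \le \varepsilon$ from \eqref{epi} to conclude $\|(I-P^j)f(\cdot,\hat x^{j-1})\|_{L^2[0,a/2]} \le \sqrt\varepsilon/\gamma$. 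Combined with $\|e^{j-1}\|_{L^2[0,a/2]} \le \sqrt{a/2}\,\|e^{j-1}\|_\infty$ and $\|e^j(0)\| \le \theta$, this produces the recursion
\[
\|e^j\|_\infty \;\le\; \theta\, e^{Ka/2} \;+\; \frac{\sqrt{a\varepsilon}\,e^{Ka/2}}{\sqrt 2\,\gamma} \;+\; \mu\,\|e^{j-1}\|_\infty,
\]
which iterates to the geometric-series bound $\chi$ (any mismatch in the coefficient of $\theta$ versus \eqref{chi} is absorbed by a slightly more pessimistic split of $\|e^j(0)\|$ into parallel and orthogonal components, in the spirit of the $\|e_o\|_\infty + \|e_i(0)\|$ decomposition of Lemma~\ref{bound}).

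The principal obstacle is the $L^\infty$-versus-$L^2$ mismatch: $\varepsilon^j$ is defined as a time integral, while the conclusion asks for a bound in the sup-norm. A direct application of Lemma~\ref{bound} would require controlling $\|(I-P^j)x\|_\infty$, which is \emph{not} given by $\varepsilon^j$. The workaround is to integrate the error ODE pointwise in time and let Cauchy--Schwarz supply the $\sqrt{a/2}$ factor that converts the $L^1$-in-time projection residual into its $L^2$ counterpart, which \emph{is} controlled by $\varepsilon^j$. A secondary subtlety, easy to overlook, is that one must pass through $\hat x^{j-1}$ before invoking $\varepsilon^j$, since the defect is measured at $\hat x^{j-1}$ and not at the unknown true solution $x$; the Lipschitz constant $K$ and the triangle inequality close this gap and are precisely what couple successive iterates and produce the contraction factor $\mu$.
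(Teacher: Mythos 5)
Your proposal is correct, and the route is the same in spirit (Lipschitz bound, Gronwall, Cauchy--Schwarz to convert the $L^1$-in-time defect into the $L^2$ quantity controlled by $\varepsilon^j$, and the triangle inequality through $\hat x^{j-1}$ to couple successive iterates) but differs in its bookkeeping. The paper multiplies the error equation by $I-P^j$ to isolate the orthogonal component $e_o^j$, integrates to get $\|e_o^j(t)\| \le \tfrac{Ka}{2}\|e^{j-1}\|_\infty + \sqrt{a\varepsilon/2\gamma^2} + \|e_o^j(0)\|$, and then invokes Lemma~\ref{bound} to convert this into a bound on $\|e^j\|_\infty$, paying $e^{Ka/2}$ on each term and $\theta$ twice (once for $e_o^j(0)$, once for $e_i^j(0)$), which is where the $2\theta e^{Ka/2}$ in \eqref{chi} comes from. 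You instead run a single Gronwall argument on the full error $e^j$, which bypasses the $e_o$/$e_i$ decomposition and Lemma~\ref{bound} entirely and yields the slightly sharper constant $\theta e^{Ka/2}$ in place of $2\theta e^{Ka/2}$ --- so your recursion implies the stated $\chi$ a fortiori, and your remark about "absorbing the mismatch" is unnecessary in the favorable direction. What the paper's route buys is the explicit orthogonal/parallel error structure that the rest of the section leans on (e.g.\ the discussion following Theorem~\ref{maintheorem} about $\|e_o^j(t)\|\le\|e^{j-1}(t)\|$); what yours buys is brevity and a marginally better constant. The only point you gloss over, which the paper handles in its opening lines by appealing to Lemmas~\ref{existence}--\ref{bound}, is that $\hat x^j(t)$ must be shown to exist and remain in $B_b(x_0)$ on $J_0$ (this is what the hypotheses $a<b/M$ and $\theta<b/2$ secure) so that the Lipschitz estimate may legitimately be applied along the iterates; you should state this explicitly before writing the error ODE.
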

\medskip

\begin{proof}
As proved in Lemma \ref{bound},  $x(t)$, $\tilde x(t)$, and $\hat
x(t)$ are well-defined over the interval $J_0$. Moreover, $x(t)$,
$\tilde x(t)$, and $\hat x(t)\in B_b(x_0)$ for all $t\in J_0$.  Multiplying (\ref{ee}) on the left by
$I-P^j$, we obtain the
   evolution equation for $e_o^j(t)$,
$$
 \dot e^j_o = -(I-P^j)f(t,x),
$$
which is equivalent to
\begin{equation}\label{eo}
\dot e_o^j = (I - {P^j})[f(t,x + {e^{j - 1}}) - f(t,x)] - (I -
{P^j})f(t,x + {e^{j - 1}}).
\end{equation}
Considering that $x(t) \in B_b(x_0)$, $\hat x^{j-1}(t) \in
B_b(x_0)$ for all $t\in J_0$ and $f(t,x)$  is a uniformly
Lipschitz function for all $(t,x)\in J_a \times B_b(x_0)$ with
constant $K$, it follows that
\begin{equation}\label{lipK}
 \left\| {f(t,x + e^{j - 1} ) - f(t,x)} \right\| \le K \left\|
{e^{j - 1}(t) } \right\|.
\end{equation}
Since $P^j$ is a projection matrix, we have
$\left\|I-P^j\right\|=1$. This equation together with (\ref{eo})
and (\ref{lipK}) yields
\begin{equation}\label{eobound}
\left\| \dot e_o^j (t)\right \| \le K\left\| {{e^{j - 1}}(t)}
\right\| + \left\| {(I - {P^j})f(t,x + {e^{j - 1}})} \right\|.
\end{equation}
For $h>0$, the expansion of $e_o^j(t+h)$ gives
\begin{equation}\label{taylor}
\left\| {e_o^j (t + h)} \right\| \le \left\| {e_o^j (t)} \right\|
+ h\left\| {\dot e_o^j (t)}  \right\| + \mathcal{O}(h^2 ).
\end{equation}
Rearranging (\ref{taylor}) and applying (\ref{eobound}) results in
\begin{equation}\label{diffbound1}
\frac{{\left\| {e_o^j(t + h)} \right\| - \left\| {e_o^j(t)}
\right\|}}{h} \le K\left\| {{e^{j - 1}}(t)} \right\| + \left\| {(I
- {P^j})f(t,x + {e^{j - 1}})} \right\| + \mathcal{O}(h),
\end{equation}
 where the $\mathcal{O}(h)$ term may be uniformly bounded
independent of $e_o^{j}(t)$. Integrating (\ref{diffbound1}) with
respect to $t$ yields
\begin{equation}\label{e0e1}
\left\| {e_o^j(t)} \right\| \le K\int_{{0}}^t {\left\| {{e^{j -
1}}(\tau )} \right\| \: \mathrm{d} \tau}   + \int_{{0}}^t {\left\|
{(I - {P^j})f(\tau ,x + {e^{j - 1}})} \right\| \: \mathrm{d} \tau
}+\left\|e_o^j(0)\right\|.
\end{equation}
For  $t\in J_0$, the first term on the right-hand side is bounded
by $Ka\left\| {{e^{j - 1}}} \right\|_\infty$/2. Using the definition of $\varepsilon^j$ in (\ref{epi}) and the fact that
$\varepsilon^j \le \varepsilon$ for each $j$, we obtain
\begin{equation*}
\varepsilon \ge \gamma^2 \int_{{0}}^{a/2} {{{\left\|
 {\left( {I - {P^j}} \right)f(\tau ,{{\hat x}^{j - 1}})} \right\|}^2} \: \mathrm{d} \tau
}.
\end{equation*}
By the Cauchy--Schwarz inequality, the second term on the right-hand side of (\ref{e0e1}) is  bounded by $\sqrt{a\varepsilon
/2{\gamma ^2}}$ when $t\le a/2$. It follows that
$$\left\| {e_o^j(t)} \right\| \le
\frac{{Ka{{\left\| {{e^{j - 1}}} \right\|}_{\infty}}}}{2} +
\sqrt{\frac{{a\varepsilon }}{{2{\gamma ^2}}}} + \left\| {e_o^j(0)}
\right\|.$$ Using (\ref{eeo}) in Lemma \ref{bound}, this
inequality yields
\begin{equation}\label{finalbound}
{\left\| {{e^j}} \right\|_\infty} \le \frac{Ka { {{ {{e^{Ka/2}} }
}}} {\left\| {{e^{j - 1}}} \right\|_\infty}}{2} +
\frac{{\sqrt{a\varepsilon} e^{Ka/2} }}{{\sqrt{2}{\gamma }}}
+e^{Ka/2}\left\|e_o^j(0)\right\|+e^{Ka/2}\left\|e_i^j(0)\right\|.
\end{equation}
If the error of the initial condition is bounded by
$\left\|e^j(0)\right\| \le \theta$ for each iteration, then $
\left\|e_o^j(0)\right\|\le \theta$, and
$\left\|e_i^j(0)\right\|\le \theta$. As a result,
\begin{equation}\label{finalbound1}
{\left\| {{e^j}} \right\|_\infty} \le \frac {Ka { {{ {{e^{Ka/2}} }
}}} {\left\| {{e^{j - 1}}} \right\|_\infty}}{2} +
\frac{{\sqrt{a\varepsilon} e^{Ka/2} }}{{\sqrt{2}{\gamma }}}
+2\theta e^{Ka/2}.
\end{equation}
By (\ref{limita}), $ Ka { {{ {{e^{Ka/2}} } }}}/2 < Ka { {{
{{e^{Kb/2M}} } }}}/2  < 1$.  Using the definition of $\chi$ in
(\ref{chi}), (\ref{finalbound1}) can be rewritten as
\begin{equation}\label{eqbound}
\|e^j\|_\infty-\chi \le Ka { {{ {{e^{Ka/2}} }
}}}/2(\|e^{j-1}\|_\infty-\chi).
\end{equation}
It follows that if $\|e^j\|_\infty-\chi>0$  for all
$j$, it converges to $0$ linearly. Otherwise, once
$\|e^{j_0-1}\|_\infty-\chi\le0$ for some $j_0$, then
$\|e^{j_0}\|_\infty\le \chi$, and so does $\|e^j\|_\infty$ for all
$j>j_0$. Therefore, we have
\begin{equation}
\mathop {\limsup }\limits_{j \to +\infty} {\left\| {{e^j}}
\right\|_{\infty}} \le \chi,
\end{equation}
 which means $\left \| e^j(t) \right \|$ is
bounded by $\chi $ as $j \to +\infty$ for all $t\in J_0$. \quad
\end{proof}
\medskip

The first term  of $\chi$ is introduced by the truncation error.
By decreasing the width of time intervals among neighboring
snapshots and increasing the number of POD modes, we can limit the
value of $\varepsilon$. The second term of $\chi$ is the magnified
error caused by $e^j(0)$. If both $\chi$ and $e^j(0)$ approach 0,
we have the following theorem.
\medskip
\begin{theorem} [{\rm  local convergence of SIRM}]\label{maintheorem}
Consider solving the initial value problem \emph{(\ref{general})}
over the interval $J_0=[0,a/2]$ by the SIRM method.  $a$, $J_a$,
$b$, $B_{b}(x_0)$, $M$, $P$, $x(t)$, $\tilde x(t)$,  $\hat x(t)$,
$e(t)$, $e_o(t)$, and $e_i(t)$ are defined as above. The superscript
$j$ denotes the $j$th iteration. Suppose $f(t,x)$ is a uniformly
Lipschitz function of $x$ with constant $K$ and a continuous
function of $t$  for all $(t, x) \in J_0 \times B_b(x_0)$. For
each iteration, the reduced subspace $S^j$ contains $x_0$ and the
initial state for the reduced model is given by $\hat x^j_0=P^j
x_0$.   Moreover, (\ref{identityj1}) is satisfied. Then the
sequence $\{\hat x^j(t)\}$ uniformly converges to $x(t)$ for all
$t \in J_0$, provided that
\begin{equation}\label{bounda}
a<\min\left [\frac {b}{M},  \frac{2e^{-Kb/2M}}  {K } \right ].
\end{equation}
\end{theorem}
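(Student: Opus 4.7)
The plan is to obtain Theorem \ref{maintheorem} as a direct specialization of Lemma \ref{proposition}: under the theorem's extra hypotheses, the two slack terms appearing in $\chi$ (the initial-condition error $\theta$ and the projection error $\varepsilon$) both vanish, so the affine recursion $\|e^j\|_\infty \le q\|e^{j-1}\|_\infty + C$ derived there collapses to a purely contractive bound $\|e^j\|_\infty \le q\|e^{j-1}\|_\infty$ with $q<1$, from which geometric (hence uniform) convergence follows in one line.

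First I would verify the two vanishings. Because $x_0 \in S^j$, the projector $P^j$ fixes $x_0$, so the prescribed initial state $\hat x^j_0 = P^j x_0$ equals $x_0$; hence $e^j(0)=0$ for every $j$, and in particular $\|e_o^j(0)\| = \|e_i^j(0)\| = 0$, killing the $\theta$-contribution. Next, hypothesis (\ref{identityj1}) yields $(I-P^j)f(t,\hat x^{j-1}) \equiv 0$ on $J_0$. Tracing back through the proof of Lemma \ref{proposition}, this is precisely the integrand that produced the $\sqrt{a\varepsilon}$ term via Cauchy--Schwarz on the right-hand side of (\ref{e0e1}); with the integrand identically zero, that term drops out as well.

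Substituting these vanishings into the derivation of Lemma \ref{proposition} (in effect, running the same Gronwall/triangle argument that produced (\ref{finalbound1}) but with $\theta=0$ and the $\varepsilon$-integral equal to zero), the recursion reduces to
\begin{equation*}
\|e^j\|_\infty \;\le\; \frac{Ka\, e^{Ka/2}}{2}\, \|e^{j-1}\|_\infty.
\end{equation*}
It remains to check that $q := Ka\,e^{Ka/2}/2 < 1$, and this is exactly what the two-sided bound (\ref{bounda}) is engineered for: $a < b/M$ gives $e^{Ka/2} < e^{Kb/2M}$, while $a < 2e^{-Kb/2M}/K$ gives $Ka/2 < e^{-Kb/2M}$; multiplying these two strict inequalities yields $q < 1$. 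Iterating, $\|e^j\|_\infty \le q^{j}\|e^0\|_\infty \to 0$, which is precisely uniform convergence of $\hat x^j(t)$ to $x(t)$ on $J_0$.

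The technical heavy lifting — the Gronwall argument, the orthogonal/parallel error splitting, and the derivation of the affine recursion — is already packaged inside Lemma \ref{proposition}, so I do not anticipate any substantive obstacle. The only point that warrants care is confirming that the two hypotheses of the theorem align cleanly with the two slack terms in $\chi$ (one must check that the first summand of $\varepsilon^j$, involving $(I-P^j)\hat x^{j-1}$, never actually enters the bound — only the $f$-summand does — so (\ref{identityj1}) alone suffices without requiring (\ref{identityjj})), and that the contraction constant is strictly, not merely weakly, less than one, which is why (\ref{bounda}) is stated with a strict minimum.
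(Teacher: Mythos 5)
Your proposal is correct and follows essentially the same route as the paper: both specialize Lemma \ref{proposition} by observing that $x_0\in S^j$ together with $\hat x_0^j=P^jx_0$ forces $e^j(0)=0$ (so $\theta=0$) and that (\ref{identityj1}) kills the projection-error term, after which the recursion becomes a strict contraction with ratio $Ka e^{Ka/2}/2<1$ guaranteed by (\ref{bounda}). Your parenthetical observation that only the $f$-summand of $\varepsilon^j$ actually enters the bound (so (\ref{identityjj}) is not needed) is in fact slightly more careful than the paper's own wording, which asserts $\varepsilon^j=0$ from (\ref{identityj1}) alone even though the definition (\ref{epi}) also contains the state-projection term.
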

\medskip
\begin{proof}
Since the initial state $\hat x^j_0$ is the projection of $x_0$
onto $S^j$, we have $e_i^j(0)=0$. Meanwhile, $x_0\in S^j$ results
in $e_o^j(0)=0$. Then the initial error satisfies $e^j(0) =0$. On
the other hand, (\ref{identityj1}) requires that $f(t,\hat
x^{j-1})$ is invariant under the projection operator $P^j$, i.e.,
 $ (I-P^j) f(t,\hat x^{j-1}) = 0$, which leads to
$\varepsilon^j=0$. Therefore, in Lemma \ref{proposition}, both
$\varepsilon$ and $\theta$ approach  0, and so does $\chi$. As a
consequence, $\{\hat x^j(t)\}$ converges to the fixed point $x(t)$
for all $t\in J_0$. \quad
\end{proof}
\medskip

It can be noted that the error bound $\chi$ of the SIRM method is
completely determined by $\theta$ and $\varepsilon$. As an
alternative to (\ref{infomatrix}), a more straightforward form of
the information matrix for the extended data ensemble can be
written as
 \begin{equation}\label{newinfo}
 \tilde Y^j:=[x_0, \gamma \hat F^j],
 \end{equation}
and the SIRM method can still converge to $x(t)$ by Theorem
\ref{maintheorem}. However,  as a Picard-type iteration, SIRM can
only be guaranteed to reduce local error within one iteration. If
$\varepsilon$ and $\theta$ approach  0, (\ref{finalbound1}) can be
rewritten as
 \begin{equation}\label{ejej}
\frac{{{{\left\| {{e^j}} \right\|}_{\infty}}}}{{{{\left\| {{e^{j -
1}}} \right\|}_{\infty}}}} \le \frac{Ka{e^{Ka/2}}}{2}.
 \end{equation}
  When the interval $J_0$ is large, for example, $a>2/K$,  the left-hand side might be greater than $1$. Thus,
although $\hat x^j(t)$ has less local error than $\hat
x^{j-1}(x)$, it might be less accurate in a global sense.

On the other hand, if the information matrix (\ref{infomatrix}) is
applied to the SIRM method, $\hat x^{j-1}(t) \in S^j$ is satisfied
for each iteration. For any $t$, $\left\| {e_o^j (t)} \right\|$
denotes the distance from $x(t)$ to $S^j$, while $\left\| e^{j-1}
(t)\right \|$ denotes the distance from $x(t)$ to $\hat
x^{j-1}(t)$. Recognizing that $\hat x^{j-1}(t)\in S^j$, we have
\begin{equation}
\left\| {e_o^j (t)} \right\| \le  \left\| e^{j-1} (t)\right \|.
\end{equation}
If $\theta$ approaches 0, so does $\|e_i^j(0)\|$. Using
(\ref{eeo}) in Lemma \ref{bound}, one obtains
 \begin{equation}\label{ejej1}
\frac{{{{\left\| {{e^j}} \right\|}_{\infty}}}}{{{{\left\| {{e^{j -
1}}} \right\|}_{\infty}}}} \le {e^{Ka/2}}.
 \end{equation}
This inequality still cannot guarantee that  $\left\| e^{j-1} (t)
\right\| < \left\| e^j (t)\right\|$ for all $t\in J$. However,
when $a>2/K$ it provides a stronger bound than (\ref{ejej}) does,
which can effectively reduce the global error.

So far, we have proved  convergence of SIRM for a local time
interval $J_0=[0, a/2]$. Since the estimates used to obtain $J_0$
are certainly not optimal, the true convergence time interval is
usually much larger. Supposing $J_0 \subset J$, we will next
prove that the convergence region $J_0$ can be extended to $J$
under certain conditions.
\medskip

\begin{theorem}[{\rm  global convergence of SIRM}]\label{globalconvergence}
Consider solving the initial value problem \emph{(\ref{general})}
over the interval $J=[0,T]$ by the SIRM method.   $P$, $x(t)$,
$\tilde x(t)$,  $\hat x(t)$, $e(t)$, $e_o(t)$, and $e_i(t)$ are
defined as above. The superscript $j$ denotes the $j$th iteration.
Suppose $f(t,x)$ is a locally Lipschitz function of $x$ and a
continuous function of $t$  for all $(t, x) \in J\times
\mathcal{D'}$, where $\mathcal{D'}$ is an open set that contains
$x(t)$ for all $t\in J$.  For each iteration, the reduced subspace
$S^j$ contains $x_0$ and the initial state for the reduced model
is given by $\hat x^j_0=P^j x_0$.   Moreover,  (\ref{identityj1})
is satisfied. The sequence $\{\hat x^j(t)\}$ then uniformly
converges to $x(t)$ for all $t \in J$.
\end{theorem}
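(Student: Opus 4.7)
The plan is to extend the local convergence of Theorem \ref{maintheorem} to all of $J$ by a finite continuation argument built on compactness of the exact orbit. First I would exploit the hypothesis that $f$ is locally Lipschitz on an open set $\mathcal{D}'$ containing the compact trajectory $\Gamma=\{x(t):t\in J\}$. A standard compactness argument produces constants $b>0$, $K>0$, and $M>0$ such that the closed $b$-tube $\mathcal{N}_b=\{y:\mathrm{dist}(y,\Gamma)\le b\}$ is contained in $\mathcal{D}'$, and $f$ is uniformly Lipschitz with constant $K$ and bounded in norm by $M$ on $J\times\mathcal{N}_b$. With these uniform constants in hand I would fix a step size $a$ satisfying \eqref{bounda}, and partition $J=[0,T]$ into finitely many subintervals $[t_k,t_{k+1}]$, $k=0,\dots,N-1$, of width at most $a/2$.

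Next I would argue by induction on $k$ that $\hat x^j(t)\to x(t)$ uniformly on $[0,t_{k+1}]$. The base case $k=0$ is exactly Theorem \ref{maintheorem} applied on $[0,a/2]$, since $x_0\in S^j$ and $\hat x^j_0=P^j x_0$ give $e^j(0)=0$, while the invariance hypothesis \eqref{identityj1} yields $\varepsilon^j=0$. For the inductive step, suppose $\hat x^j\to x$ uniformly on $[0,t_k]$, so that $\theta_j:=\|\hat x^j(t_k)-x(t_k)\|\to 0$. I would re-run the derivation of Lemma \ref{proposition} on the shifted interval $[t_k,t_{k+1}]$, starting the integration in \eqref{e0e1} at $t_k$ rather than $0$ and using $\theta_j$ in place of $\theta$. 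Because \eqref{identityj1} still holds globally and \eqref{bounda} still holds for the same $a,K$, the same recursion
\[
\|e^j\|_{L^\infty([t_k,t_{k+1}])}-\chi_j \;\le\; \tfrac{Ka\,e^{Ka/2}}{2}\bigl(\|e^{j-1}\|_{L^\infty([t_k,t_{k+1}])}-\chi_j\bigr)
\]
emerges, with contraction factor $c=Ka\,e^{Ka/2}/2<1$ and an inhomogeneous term $\chi_j=2\theta_j\,e^{Ka/2}/(1-c)$ that tends to $0$. A standard lemma for scalar recurrences $a_j\le c\,a_{j-1}+b_j$ with $c<1$ and $b_j\to 0$ gives $\|e^j\|_{L^\infty([t_k,t_{k+1}])}\to 0$, closing the induction. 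After $N$ steps we obtain uniform convergence on all of $J$.

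The main technical obstacle is ensuring that the local Lipschitz constant $K$ and the bound $M$ may actually be used on each subinterval, i.e.\ that $\hat x^j(t)$ does not leave $\mathcal{N}_b$ on $[t_k,t_{k+1}]$. I would handle this in parallel with the induction: once uniform convergence on $[0,t_k]$ is known, for $j$ large enough $\hat x^j(t_k)$ lies well inside $\mathcal{N}_b$, and on $[t_k,t_{k+1}]$ the reduced flow $\dot{\hat x}^j=P^j f(t,\hat x^j)$ has speed bounded by $M$ on $\mathcal{N}_b$, so shrinking $a$ if necessary keeps the trajectory in the tube over a single subinterval. A secondary subtlety worth stating carefully is that although the reduced subspace $S^j$ and projector $P^j$ are constructed globally on $J$ (so $\varepsilon^j=0$ is a global statement along $\hat x^{j-1}$), the initial-error term $\theta_j$ on the $k$th subinterval is produced by the algorithm's dynamics from the preceding subinterval, not by a projection of $x(t_k)$; the inductive hypothesis is precisely what is needed to drive this term to zero.
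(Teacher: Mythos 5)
Your proposal is correct and follows essentially the same route as the paper: both arguments reduce global convergence to repeated application of the local error bound of Lemma~\ref{proposition} to shifted initial value problems whose initial errors vanish as $j\to\infty$, with the uniform Lipschitz constant obtained from compactness of the orbit inside $\mathcal{D}'$. The only difference is presentational --- you run a forward induction over a fixed partition of $[0,T]$ while the paper argues by contradiction on a maximal interval of uniform convergence --- and your explicit handling of the recurrence $a_j\le c\,a_{j-1}+b_j$ with $b_j\to 0$ and of keeping $\hat x^j$ inside the tube is, if anything, slightly more careful than the published argument.
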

\medskip

\begin{proof}
 Since $\mathcal{D'}$ is  open, there exists a constant $b$ such that
 $b>0$, and $\mathcal{E}:=  \cup_t {\bar B_b}(x(t)) \subset \mathcal{D'}$. Since $f(t,x)$
is locally Lipschitz on $J\times \mathcal{D'}$ and $\mathcal{E}$
is compact, $f(t,x)$ is Lipschitz on $J \times \mathcal{E}$. Let
$K$ denote the Lipschitz constant for $(t,x)\in J \times
\mathcal{E}$. In addition, we can choose the value of $a$, which
is bounded by (\ref{bounda}). Let $J_m$ be the maximal interval in
$J$ such that for all $t\in J_m$,
 $\hat x^j(t) \to x(t)$
uniformly as $j\to \infty$. Theorem \ref{maintheorem} indicates
that SIRM will generate a sequence of  functions $\{\hat x^j(t)\}$
that uniformly converges to $x(t)$ for all $t\in J_0=[0, a/2]$.
For this reason, we have $J_0 \subset J_m$.

 Now assume $J_m\ne J$. Then, there exists a
$t_i \in J_m$  such that  $t_i+a/2\le T$, but $t_i+a/2 \notin
J_m$. $t_i \in J_m$ means for every $\kappa>0$ there exists an
integer $M_1(\kappa)>0$ such that for all $j$ with
$j>M_1(\kappa)$,  $\hat x^j(t_i)$ uniquely exists and $\left\|\hat
x^j(t_i) - x(t_i)\right\|<\kappa$.

Consider the initial value problem
\begin{equation} \label{eqy}
\dot y=f(t,y); \ \ \ \ \ y(0)=y_0=x(t_i).
\end{equation}
The corresponding reduced model of SIRM at iteration $l$ is given
by
\begin{equation}\label{odeuni}
\dot {\hat y}^l = P^l f(t,  \hat y^{l}); \ \ \ \ \ \hat
y^l(0)=y_0+e^l(t_i),
\end{equation}
where $e^l(t_i)=\hat x^l(t_i)-x(t_i)$.
For an arbitrary small positive number $\kappa'$,
 Lemma \ref{proposition}
implies that    there exists a
positive integer $M_2(\kappa')$ such that whenever
$l>M_2(\kappa')$ and $t\in J_0=[0, a/2]$,  $$\left\| \hat
y^l(t)-y(t)\right\|<\chi+\kappa'.$$ Plugging  $\chi$ from
(\ref{chi}) into this inequality and replacing $\theta$ by $\kappa$, we have
\begin{equation}\label{constraint}
\left\| \hat y^l(t)-y(t)\right\|\le \frac{{\sqrt{a\varepsilon}
{e^{Ka/2}}}}{{{\sqrt{2}\gamma}(1 - Ka{e^{Ka/2}}/2)}} +
\frac{2\kappa e^{Ka/2}}{{1 - Ka{e^{Ka/2}}/2}} +\kappa'.
\end{equation}
In an ideal case, the truncation error is
zero, i.e., $\varepsilon=0$. Then, the right-hand side of
(\ref{constraint}) can be arbitrarily small. The uniqueness lemma for ODE's (Lemma \ref{existence})
yields
$y(t)=x(t_i+t)$ and $\hat y^j(t)=\hat x^j(t_i+t)$. Therefore, for
every $\epsilon>0$, there exists an integer
\begin{equation}
N(\epsilon)=M_1\left( {\frac{{(1 - Ka{e^{Ka/2}}/2)\epsilon }}{{4e^{Ka/2}}}} \right) + M_2\left( {\frac{\epsilon }{2}} \right)
\end{equation}
 such that, as long as $j>N(\epsilon)$,
$\left\| \hat x^j(t)-x(t)\right\|\le \epsilon$ holds for all
$t\in[0, t_i+a/2]$. Moreover, $t_i+a/2\le T$. However, this
contradicts our assumption that $t_i+a/2\notin J_m$.
Therefore, $J_m=J$, i.e., $\hat x^j(t)$ uniformly converges to
$x^t(t)$ for all $t\in J$. \quad
\end{proof}

\subsection{Computational complexity}\label{sec:complexity}
The computational complexity of the SIRM method for solving an
initial value problem is discussed in this subsection.  We follow
\cite{PetzoldLR:03a} when we estimate the computational cost of the procedures related to the standard
POD-Galerkin approach.

Let $\gamma(n)$ be the cost of computing
the original vector field $f(t,x)$, and let $\hat \gamma(k,n)$ be the
cost of computing the reduced vector field $\Phi^Tf(t,\Phi z)$
based on the POD-Galerkin approach.  In the full model,  the
cost of one-step time integration using Newton iteration is given
by $b\gamma(n)/5+b^2n/20$ if all $n\times n$
matrices are assumed to be banded and have $b+1$ entries around the
diagonal~\cite{PetzoldLR:03a}. Thus, the total cost of the full model for
$N_T$ steps is given by
\begin{equation}\label{costfull}
N_T\cdot\left(b\gamma(n)/5+b^2n/20\right).
\end{equation}

Next, we analyze the complexity of Algorithm \ref{alg:SIRM}. Assuming a trial solution is given initially,  the computational cost  for each iteration mainly
involves the following procedures. In procedure 3,  an $n\times m$
vector field matrix, $\hat F^{j-1}$, is computed based on $m$
snapshots. In procedure 5,  from an $n\times 2m$ information
matrix, $\hat Y^{j-1}$,  the empirical eigenfunctions $\Phi^j$ can
be obtained in $4m^2n$ operations by  SVD~\cite{TrefethenLN:97a}.
In procedure 6,   the original system is projected onto a subspace
spanned by $\Phi^j$,  and this cost is denoted by $\beta(k,n)$. For a
linear time-invariant system, $\dot x=Ax$, $\beta(k,n)$ represents
the cost to compute $(\Phi^j)^T A \Phi^j$, which is given by $bnk$
for sparse $A$. For a general system, $\beta(k,n)$ is a nonlinear function of $n$. In procedure 7, the reduced model is evolved for
$N_T$ steps by an implicit scheme to obtain $z^j(t)$. If the reduced model inherits the same scheme from the full model, then one-step time integration needs
$k \hat \gamma (k,n)/5+k^3/15$ operations~\cite{PetzoldLR:03a}. In
procedure 8, an $n\times m$ snapshot matrix $\hat X^j(t)$ is
constructed through $\hat x(t_i)=\Phi z(t_i)$. Table
\ref{tab:complexity} shows the asymptotic complexity for each
basic procedure mentioned above. Let $N_I$ denote
the number of iterations; then, the total cost of Algorithm
\ref{alg:SIRM} is given by
\begin{equation}\label{costrom}
N_I\cdot\left(4m^2n+\beta (k,n)+N_T\cdot\left(k \hat \gamma
(k,n)/5+k^3/15\right)+m\gamma(n)+mnk\right).
\end{equation}

Notice that the first three terms in (\ref{costrom}) represent the
cost for the classic POD-Galerkin method, while construction of
the extended data ensemble needs extra computational overhead,
$m\gamma(n)+mnk$, for each iteration. On one hand, the subspace dimension $k$ is no greater  than the number of sampling points $m$, which means $mnk<4m^2n$.
On the other hand, we can always choose an optimal $m$ value such that $ m\ll N_T$. Thus, the extra computational overhead plays a secondary  role in (\ref{costrom}), and the computational complexity of  Algorithm \ref{alg:SIRM} is approximately  equal to the number of
iterations, $N_I$, multiplied by the cost of the standard POD-Galerkin
approach.

\begin{table}
\begin{center}
\caption{ Complexity of Algorithm \ref{alg:SIRM} for one iteration
using an implicit scheme for time integration}
 \label{tab:complexity}
\begin{tabular}{|l|l|}
\hline
Procedure & Complexity\\
\hline
  Compute a vector field matrix $\hat F^{j-1}$   & $ m\gamma(n)$\\
  \hline
  SVD: empirical eigenfunctions $\Phi^j$    & $4m^2n$\\
  \hline
  Construct a reduced model   &  $\beta(k,n)$\\
  \hline
  Evolve the reduced model  &  $N_T (k \hat \gamma (k,n)/5+k^3/15)$\\
  \hline
Obtain an approximate solution  $\hat X^j$   & $mnk$\\
 \hline
\end{tabular}
\end{center}
\end{table}

 Algorithm \ref{alg:SIRM} does not explicitly specify the
trial solution $\hat x^0(t)$. In fact, the convergence of SIRM
does not depend on $\hat x^0(t)$, as previously shown. Thus, we can simply set $\hat x^0(t)$ as a constant, i.e.,
$\hat x^0(t)=x_0$. However, a ``good'' trial solution could lead
to a convergent solution in fewer iterations and could thus
decrease the total computational cost. For example, if the full
model is obtained by a finite difference method with $n$ grid
points and time step $\delta t$, a trial solution could be
obtained by a coarse model,  using the same scheme but $n/10$ grid
points with time step $10 \times \delta t$. Thus, the coarse model
can cost less than $1\%$ of the required operations in the full model.

Notice that  $\hat \gamma(k,n)
\ll \gamma(n)$ is achieved only when the analytical formula of
$\Phi^T f(t,\Phi z)$ can be significantly simplified, especially
when $f(t,x)$ is a low-degree polynomial  of $x$
\cite{PetzoldLR:03a}. Otherwise, it is entirely possible that the reduced model could be more expensive than the original one.
Because of this effect, there is no guarantee that  Algorithm \ref{alg:SIRM} can speed
up a general nonlinear system.
 However, it should be emphasized that the POD-Galerkin
approach is not the only method that can be used to construct a
reduced model in the framework of SIRM;  in principle, it can be
substituted by a more efficient model reduction technique when
$f(t,x)$ contains a nonlinear term, such as  trajectory piecewise
linear and quadratic approximations~\cite{WhiteJ:03a, WhiteJ:06a,
WeileDS:01, WhiteJ:00a}, the empirical interpolation method
\cite{PateraAT:06a}, or its variant, the discrete empirical
interpolation method~\cite{SorensenDC:10a}. This article, however,
focuses on using
 SIRM to obtain  an accurate solution without a precomputed database. Therefore,
 the   numerical  simulations  in the  next   subsection   are still based on the
 classic POD-Galerkin approach.

\subsection{Numerical Results}\label{sec:numerical} The proposed algorithm,
SIRM, is illustrated in this subsection by  a linear advection-diffusion equation
and a nonlinear Burgers equation. These examples focus on demonstrating the
capability  of SIRM to deliver accurate results using reduced
models. We also show an application of SIRM for the posteriori
error estimation of a coarse model.
\subsubsection{Advection-Diffusion Equation}
Let $u=u(t,x)$. Consider the one-dimensional advection-diffusion
equation with constant moving speed $c$ and diffusion coefficient
$\nu $,
\begin{equation}\label{adv_diff}
u_t=  - cu_x+ \nu u_{xx},
\end{equation}
 on space $x\in[0,1]$. Without loss of generality,
 periodic boundary conditions are applied,
\begin{equation}\label{boundary}
\begin{array}{l}
u(t,0)=u(t,1), \\
 u_x(t,0)=u_x(t, 1). \\
 \end{array}
\end{equation}
 The initial condition is provided by a cubic spline function,
\begin{equation}\label{initial}
u(0,x)=\left\{
\begin{array}{ccl}
 \vspace{3pt} 1-\frac{3}{2}s^2+\frac{3}{4}s^3&  \text{if }&  0\le s \le 1, \\
 \frac{1}{4}(2-s)^3&  \text{if }&  1<s\le2 ,\\ 0&  \text{if }&  s>2,
\end{array} \right.
\end{equation}
where $s=10\times|x-1/3|$. The fully resolved model is obtained
through a high-resolution finite difference simulation with
spatial discretization by $n$ equally spaced grid points. The
advection term is discretized by the  first-order upwind
difference scheme with the explicit two-step Adams--Bashforth method for time
integration, while the diffusion term is discretized by the
second-order central difference scheme with the Crank--Nicolson
method for time integration.

\begin{figure}
\begin{center}
\begin{minipage}{0.48\linewidth} \begin{center}
\includegraphics[width=1\linewidth]{./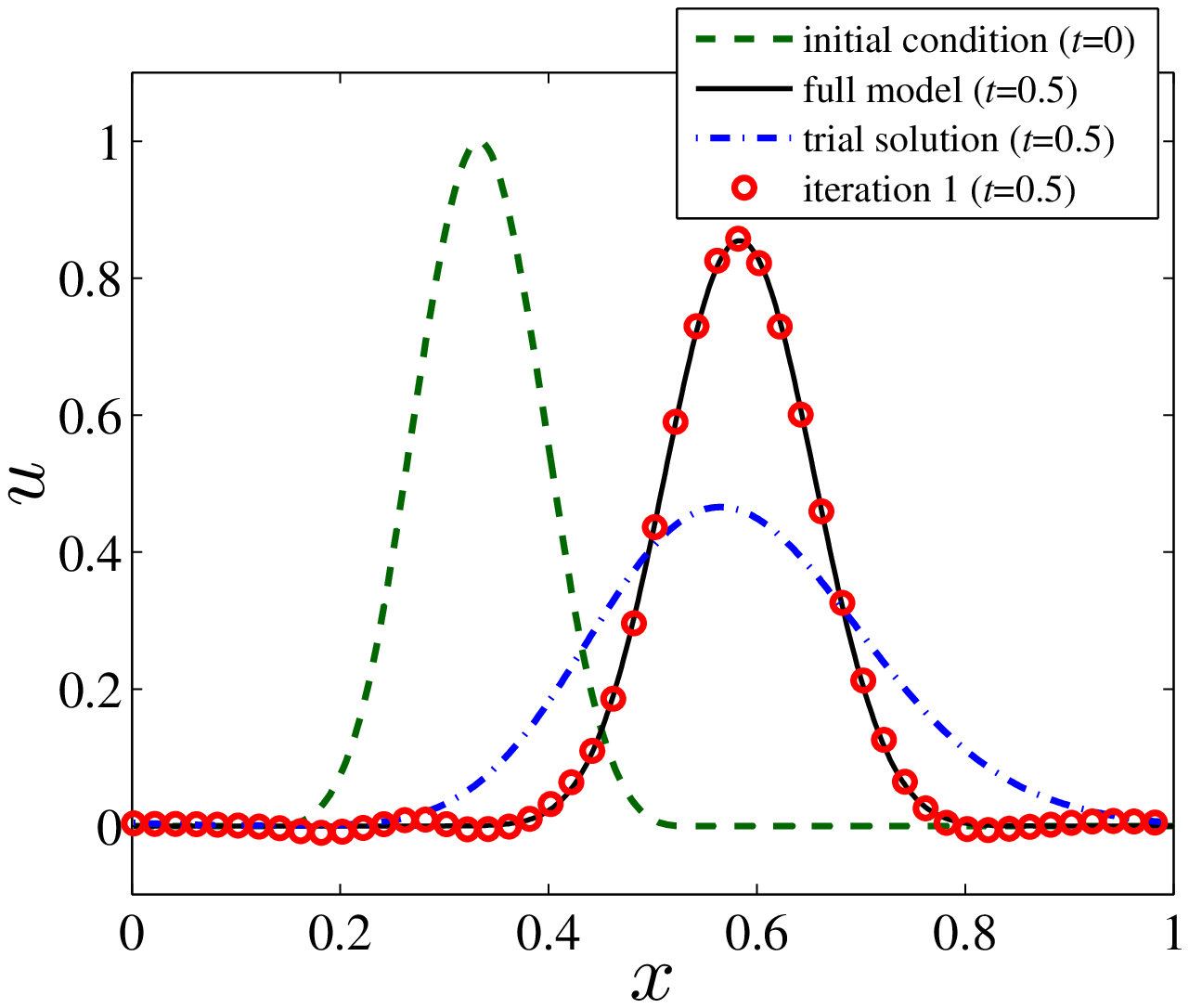}
\end{center} \end{minipage}
\begin{minipage}{0.48\linewidth} \begin{center}
\includegraphics[width=1\linewidth]{./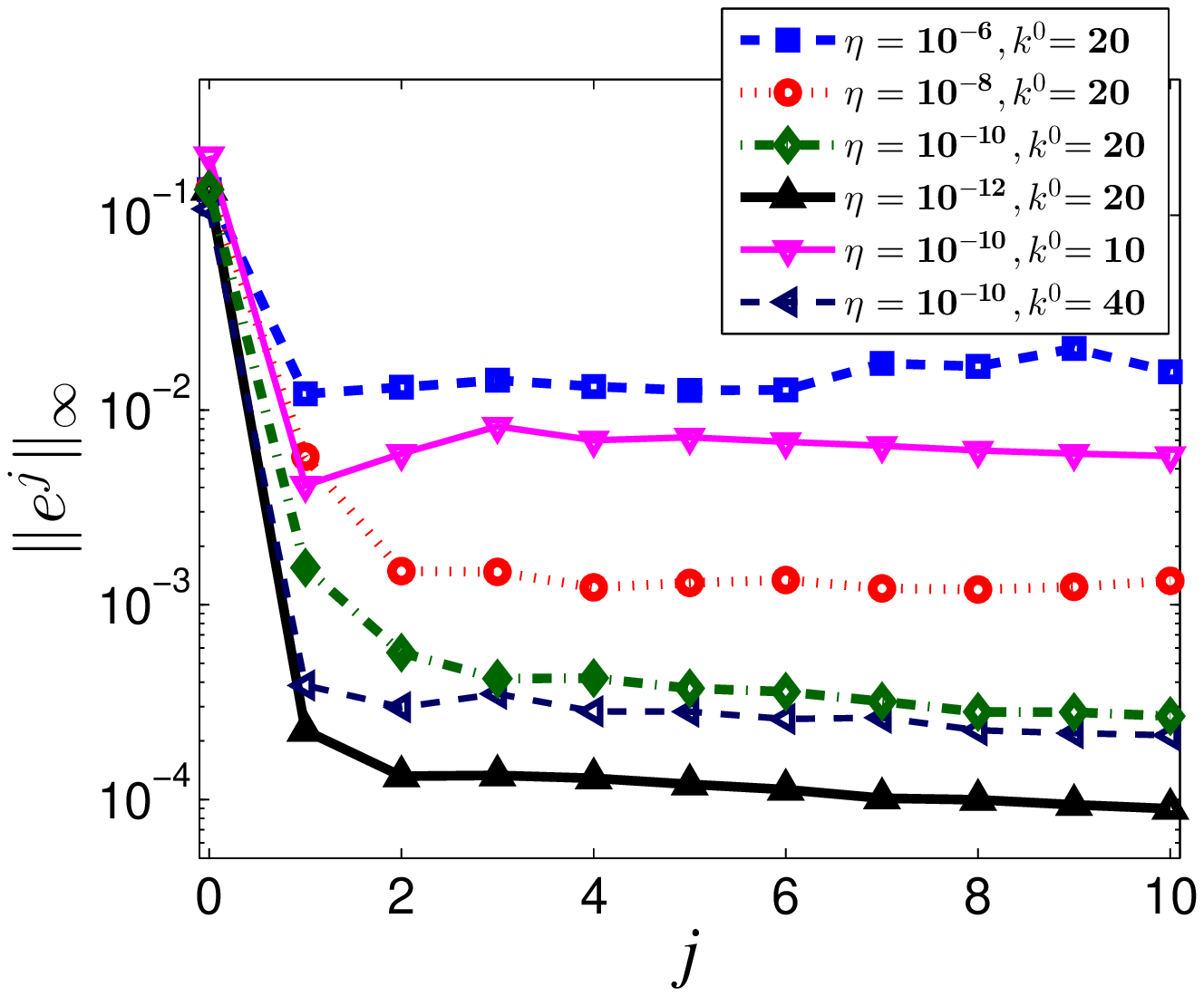}
\end{center} \end{minipage}\\
\begin{minipage}{0.48\linewidth}\begin{center} (a) \end{center}\end{minipage}
\begin{minipage}{0.48\linewidth}\begin{center} (b) \end{center}\end{minipage}
 \caption{ (a) The velocity profiles  at $t=0$ and
$t=0.5$ of the one-dimensional advection-diffusion equation with
constant speed $c=0.5$ and  diffusion coefficient $\nu=10^{-3}$.
$n=500$ grid points are used to obtain the full model for the
fixed space domain $[0, 1]$. The trial solution is obtained by
extracting the first 10 Fourier modes from a coarse model
based on $k^0=20$ grid points. When $\eta=10^{-8}$, it takes  one
iteration for SIRM to obtain an accurate solution by 12 modes.
  (b) Convergence of SIRM for different $\eta$ and $k^0$ values.
Plot of the maximal $L_2$ error, $\|e^j\|_\infty=\sup\{\|\hat
u^j(t)-u(t)\|:t\in[0,0.5]\}$,
 between  the benchmark solution $u(t)$ and the iterative solution $\hat
 u^j(t)$ for $t\in[0,0.5]$.
} \label{fig:advection12}
 \vspace{-3mm}
\end{center}
\end{figure}

 For our numerical experiments, we consider a system with $c=0.5$
and $\nu=10^{-3}$, which gives rise to a system with diffusion as
well as advection propagating to the right. This can be seen in
Figure \ref{fig:advection12}(a), where the initial state and the
final state (at $t = 0.5$) are shown. The full model (reference
benchmark solver) is computed through $n=500$ grid points. Thus, the unit step can be
set as $\delta t=10^{-3}$ such that the Courant--Friedrichs--Lewy (CFL)
condition  is satisfied for  the stability
requirement, i.e., $c\delta t/ \delta x \le 1$. In order to
initialize SIRM, a smaller simulation is carried out by the finite
difference method with  a coarse grid of $k^0=20$  and a larger
time step of $2.5\times 10^{-2}$. In order to obtain a smooth
function for the  trial solution, the coarse solution is
filtered by extracting the first $10$ Fourier modes.
When $\eta=10^{-8}$, the full-order equation is projected
onto a subspace spanned by $k=12$ dominant modes during the first iteration and a better
approximation  is obtained. For different $\eta$ and $k^0$ values, Figure
\ref{fig:advection12}(b) compares the maximal $L_2$ error
 between  the benchmark solution $u(t)$ and the iterative solution $\hat
 u^j(t)$ for $t\in[0,0.5]$ in the first $10$ iterations. Each
 subspace dimension is adaptively determined by (\ref{num}).
 If $k^0=20$, the first three iterations of SIRM
respectively use 9, 9, and 11 dominant modes when $\eta=10^{-6}$;
  use 12, 14, and 14 dominant modes when $\eta=10^{-8}$;  use  14, 17,
and 18 dominant modes when $\eta=10^{-10}$; and  use 17, 19, and
20 dominant modes when $\eta=10^{-12}$. As expected, a smaller
$\eta$ value results in a smaller truncation error  produced by SVD and
the total error, $\|e^j\|_\infty$, for an approximate solution.
Meanwhile, a trial solution with a higher initial dimension, $k^0$,
could also significantly decrease the error for the first 10
iterations. It is also noted that $\|e^j\|_\infty$ is not a
monotonically decreasing function of $j$, especially when $\eta=10^{-6}$
and $k^0=20$. This does not contradict the convergence
analysis in the previous subsection. As a variant of the Picard
iteration, the SIRM method achieves a better local solution in
each iteration. As (\ref{ejej1}) indicates, we can only guarantee
 ${\left\| {{e^j}} \right\|}_{\infty} \le {e^{Ka/2} {\left\| {{e^{j
- 1}}} \right\|}_{\infty}}$ in a global sense.

Before switching to the next numerical example, we compare the performance of  SIRM  with another
online manifold learning technique, DIRM~\cite{PetzoldLR:02a}. The DIRM method splits the whole system into $m_n$ subsystems. Starting with a trial solution, DIRM  simulates each subsystem in turn and repeats this process until a globally convergent solution is obtained. For iteration $j$,
  DIRM connects the unreduced subsystem $i$ with the reduced
  versions   of all other subsystems and simulates the resulting
  system
\begin{equation} \label{eq:SIRM}
\begin{array}{l}
 \dot x_i^j = {f_i}(t,{X^j_i}), \\
 \dot z_l^j = (\Phi _l^j)^T{f_l}(t,{X^j_i}), \ \ \ \ \ \ \ \ \ \ l=1,...,i-1, i+1, ..., m_n, \\
 \end{array}
 \end{equation}
where  $X^j_i=[\Phi_1^j z_1^j; ... ;\Phi_{i-1}^j
z_{i-1}^j; x_i^j; \Phi_{i+1}^j z_{i+1}^j; ... ; \Phi_{m_n}^j
z_{m_n}^j]$. If $x_i^j \in \mathbb{R}^{n/m_n}$ and $z_l^j \in \mathbb{R}^{k}$,  the reduced model of DIRM has a dimension of $m_nk+m/m_n$. Since DIRM reduces the dimension for each subsystem, rather than the original system,  it inevitably keeps some
redundant dimensions.

\begin{table}
\begin{center}
\caption{The minimal subspace dimension of DIRM and SIRM that is
required for solving the one-dimensional advection-diffusion equation when the
error of the first iteration is smaller than $10^{-3}$, i.e.,
$\|e^1\|_\infty<10^{-3}$. Parameter values are $n=500$, $c=0.5$,
$\delta t=10^{-3}$. The time domain is $[0, 0.5]$. The trial
solution is obtained by extracting the first 10 Fourier modes from
a coarse simulation based on $20$ grid points.}
 \label{tab:DIRM}
\begin{tabular}{|c|c|c|c|c|}
\hline
  & $\nu=10^{-1}$ &  $\nu=10^{-2}$ & $\nu=10^{-3}$ & $\nu=10^{-4}$\\
  \hline
  DIRM & 92 & 92 & 116& 116\\
SIRM & 13 & 12 & 15 & 16 \\
 \hline
\end{tabular}
\end{center}
\end{table}

Table \ref{tab:DIRM} compares the minimal subspace dimension of
DIRM and SIRM that is required for solving
(\ref{adv_diff}) when the error of the first iteration is smaller
than $10^{-3}$. We use all the aforementioned parameters except
scanning $\nu$ from $10^{-1}$ to $10^{-4}$. For the DIRM
application, the whole system with $n=500$ is divided into 25
subsystems, and the dimension of each subsystem is 20. When $\nu$
is greater than $10^{-2}$,  the DIRM method uses three modes for each
subsystem, and therefore the dimension of DIRM
is $3\times 24+20=92$. When $\nu$ decreases to $10^{-3}$ and less,  DIRM requires
four modes for each system in order to maintain high accuracy, and
the subspace dimension grows to $4 \times 24 + 20 =116$. Since SIRM requires
 fewer modes and  simulates only one reduced system rather than
20 subsystems,  it is much more efficient than
DIRM for solving  (\ref{adv_diff}).

\subsubsection{Viscous Burgers Equation}
The viscous Burgers equation  is  similar to the
advection-diffusion equation except, in the case of the viscous
Burgers equation, the advection velocity is no longer constant.
The general form of the one-dimensional Burgers equation  is given
by
\begin{equation}\label{burgers}
u_t =  - uu_x + \nu u_{xx},
\end{equation}
where $\nu$ is the diffusion coefficient.  Let $\Omega=[0, 1]$ denote
the computational domain. Periodic boundary conditions
(\ref{boundary}) are applied. The cubic spline function
(\ref{initial}) is used for the initial condition.

\begin{figure}
\begin{center}
\begin{minipage}{0.48\linewidth} \begin{center}
\includegraphics[width=1\linewidth]{./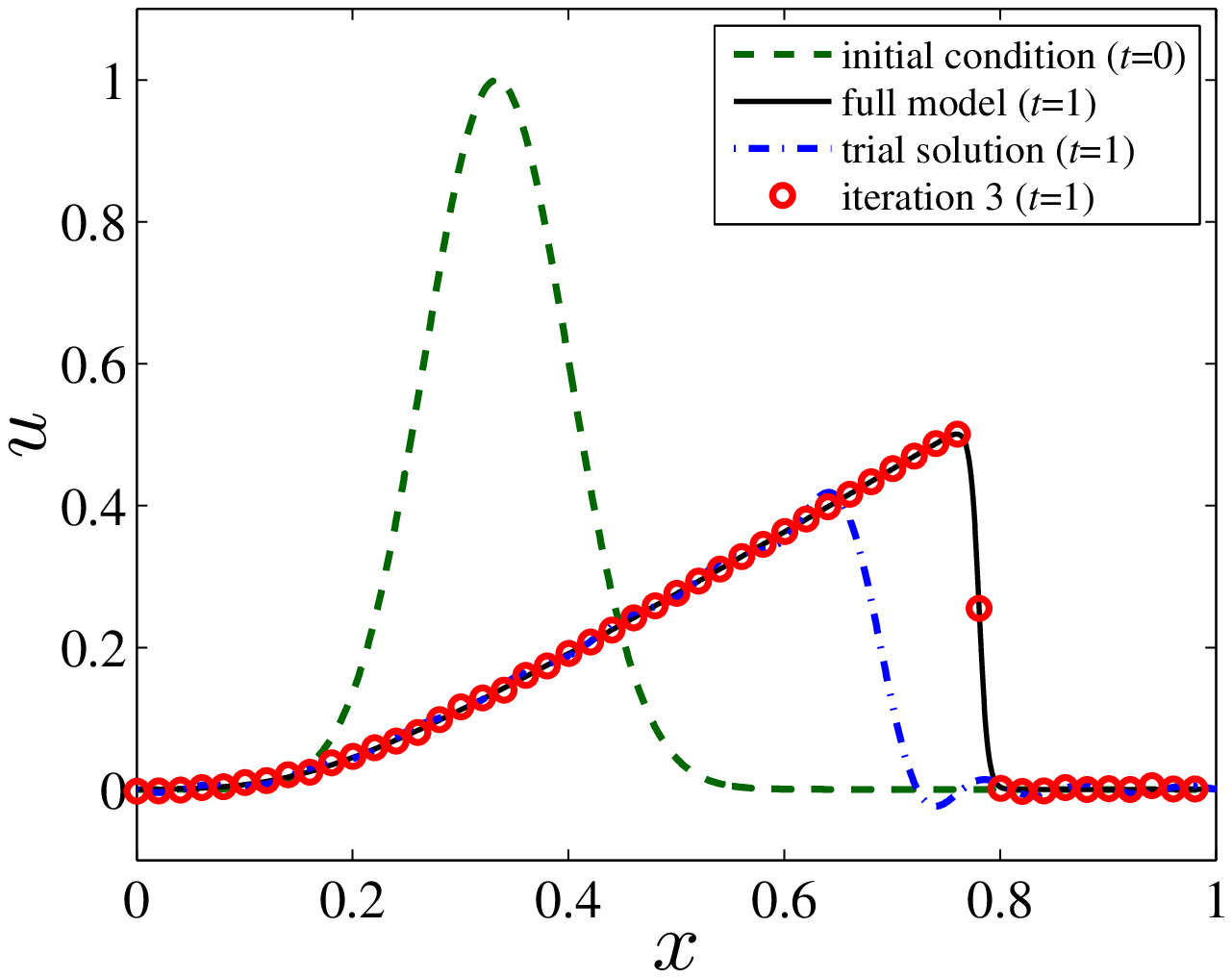}
\end{center} \end{minipage}
\begin{minipage}{0.48\linewidth} \begin{center}
\includegraphics[width=1\linewidth]{./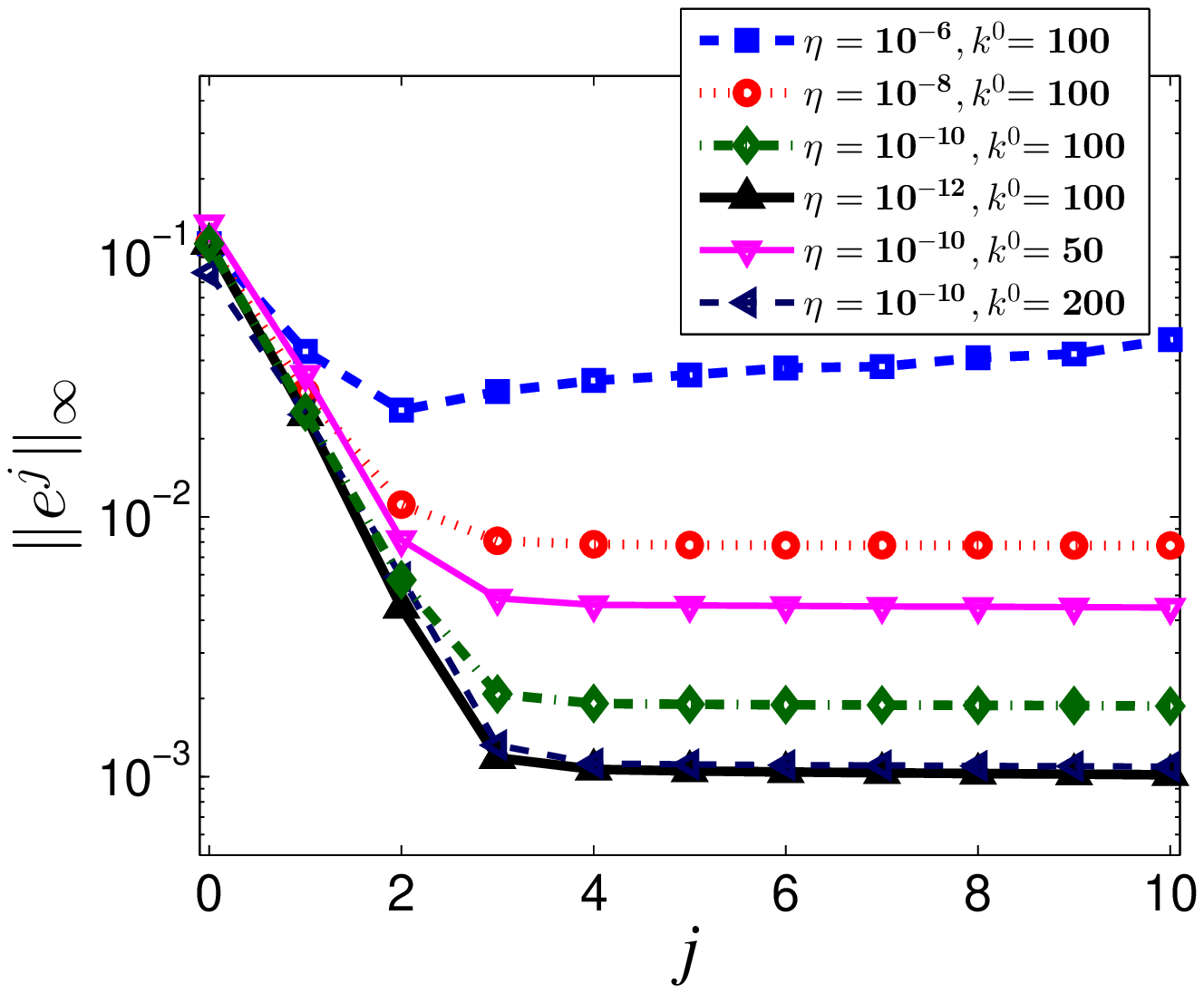}
\end{center} \end{minipage}\\
\begin{minipage}{0.48\linewidth}\begin{center} (a) \end{center}\end{minipage}
\begin{minipage}{0.48\linewidth}\begin{center} (b) \end{center}\end{minipage}
\caption{ (a) The velocity profiles  at $t=0$ and
$t=1$ of  the one-dimensional Burgers equation with constant
diffusion coefficient $\nu=10^{-3}$. $n=2000$ grid points  are used
to obtain the full model for the fixed space domain $[0, 1]$,
while $k^0=100$ grids are used to obtain a coarse model. The first
10 Fourier modes are extracted to construct the trial solution.
When $\eta=10^{-10}$, it takes three iterations for SIRM to obtain an
accurate solution. (b) Convergence of SIRM for different $\eta$
and $k^0$ values. Plot of the maximal $L_2$ error,
$\|e^j\|_\infty=\sup\{\|\hat u^j(t)-u(t)\|:t\in[0,1]\}$,  between
the benchmark solution $u(t)$ and the iterative solution $\hat
 u^j(t)$. }
\label{fig:1dburgers}\vspace{-3mm}
\end{center}
\end{figure}

In the numerical simulation, the diffusion coefficient is given by $\nu=10^{-3}$. The full model is
obtained using  $n=2000$ grid points, while the trial solution is
obtained by extracting the first $10$ Fourier modes from a coarse
simulation with $k^0=100$ grid points. Because the one-dimensional
Burgers equation has a positive velocity, a wave will propagate to
the right with the higher velocities overcoming the lower
velocities and creating steep gradients. This steepening continues
until a balance with the dissipation is reached, as shown by the
velocity profile at $t=1$ in Figure \ref{fig:1dburgers}(a).
Because states of the Burgers equation have  high variability with
time evolution,
 more modes are necessary in order to present the whole solution trajectory
 with high accuracy. Meanwhile, the SIRM method requires more
iterations to obtain convergence.

The convergence plot for SIRM is shown in Figure
\ref{fig:1dburgers}(b).  Equation (\ref{num}) gives an adaptive
dimension, $k$, in each iteration: their values are 21, 38, and 60
for the first three iterations when $\eta=10^{-6}$;
  are 26, 49, and 85 when $\eta=10^{-8}$;  are  30,
  62, and 105 when $\eta=10^{-10}$; and  are 34, 76, and
129 when $\eta=10^{-12}$. When $\eta\le 10^{-8}$, the error of the
approximate solution decreases in the first few iterations and
then converges to a fixed value,  which is mainly determined by
the truncation error  produced by SVD. In order to achieve higher
resolution, for each iteration, more snapshots are needed for each
iteration to construct the information matrix and include more
modes in the associated reduced model.

What is more, the SIRM method can be used to estimate errors of
 other approximate models as well. The Euclidean distance between the actual
solution $u(t)$ and the approximate solution $\hat u^0(t)$ as a
function of $t$  can indicate the accuracy of a coarse model (or a
reduced model),
\begin{equation}
\|e^0(t)\|=\| u(t)-\hat u^0(t)\|.
\end{equation}
However, in many applications, the actual solution $u(t)$ is
unknown or very expensive to obtain. In this case, the SIRM method
can be used to obtain a more precise solution  $\hat u^1(t)$, and
the Euclidean distance between $\hat u^1(t)$ and $\hat u^0(t)$ can
be used as an error estimator,
\begin{equation}\label{deltat}
\|\Delta^0(t)\|=\|\hat u^1(t)-\hat u^0(t)\|.
\end{equation}
Although $\hat u^1(t)$ is only guaranteed to have higher accuracy
than $\hat u^0(t)$ locally, (\ref{deltat}) can be applied to
identify whether and when the trial solution has a significant
discrepancy from the actual solution. More generally, the error of
the iterative solution $\hat
 u^j(t)$   computed by SIRM,
\begin{equation}
\|e^j(t)\|=\|u(t)-\hat u^j(t)\|,
\end{equation}
 can (at least locally) be approximated by the difference
between $\hat u^{j+1}(t)$ and $\hat u^j(t)$, as follows:
\begin{equation}
\|\Delta ^j(t)\|=\|\hat u^{j+1}(t)-\hat u^j(t)\|.
\end{equation}
For this reason,  the criterion $\left \| \hat x^{j}- \hat x^{j-1}
\right \|_{\infty}<
    \epsilon$ is used in Algorithm \ref{alg:SIRM} to indicate convergence of SIRM.

Revisiting the  one-dimensional Burgers equation,  Figure
\ref{fig:burgerserr} shows that $\|\Delta ^j(t)\|$ is a good
approximation for  the actual error $\|e^j(t)\|$    for $t\in[0,1]$.

\begin{figure}
\begin{center}
\begin{minipage}{0.7\linewidth} \begin{center}
\includegraphics[width=1\linewidth]{./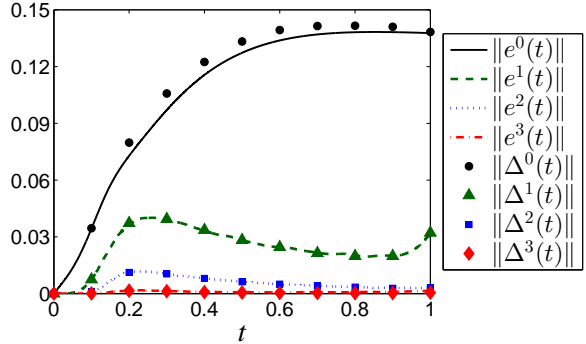}
\end{center} \end{minipage}\\
\caption{   Comparison  of the actual error
$\|e^j(t)\|=\|u(t)-\hat u^j(t)\|$ with the estimated error
$\|\Delta ^j(t)\|=\|\hat u^{j+1}(t)-\hat u^j(t)\|$  for
$t\in[0,1]$, where $u(t)$ is the actual solution of the
one-dimensional Burgers equation computed by $2000$ grid points,
$\hat u^0(t)$ is the trial solution obtained by extracting the
first 10 Fourier modes from a coarse simulation based on 100
grid points, and $\hat u^j(t)$ ($j\ne0$) are iterative solutions
computed by the SIRM method.} \label{fig:burgerserr}\vspace{-3mm}
\end{center}
\end{figure}

\section{Local SIRM}\label{sec:llp}
In the previous section, the presented analysis and simulations
illustrate that under certain conditions the SIRM method is able
to obtain a convergent solution in the global time domain.
However, the
 SIRM method still has existing redundancy with respect to both dimensionality and
computation, as described in the following, that could be
improved.

First, the reduced subspace formed by POD in SIRM keeps some
redundant dimensions of the original system in each iteration. To
explain this, consider  a large-scale dynamical system whose
solution exhibits vastly different states as it evolves over a
large time horizon. In order to obtain a highly accurate
representation for the entire trajectory, we need a subspace with
relatively high dimensionality to form a surrogate model. However,
projection-based model reduction techniques usually generate small
but full matrices from large (but sparse) matrices. Thus, unless
the reduced model uses significantly fewer  modes,  computing the
reduced model could potentially be more expensive than computing the
original one. Notice that the orbit of a dynamical system
(\ref{general}) is a one-dimensional curve; thus, it is desired
that  a local section of curve be embedded into a linear subspace of much
lower dimensionality.

Second,  for each iteration, SIRM requires that the entire trajectory
be computed from the initial time  to the final
 time of interest, $T$, which causes computational redundancy. As
 a variant of the Picard iteration,  the rate of convergence of  SIRM
 could be very slow for a nonlinear system with a large time domain.
 Under certain conditions, SIRM has a locally linear
 convergence. As inequality (\ref{eqbound}) indicates, when $t\in J_0$ the rate
 of convergence of $\|e^j\|_\infty-\chi$ is given
 by $Ka\exp(Ka/2)/2$. However,  as (\ref{ejej1}) suggests, we
 cannot guarantee that  SIRM could obtain a better global
 solution in each iteration. Meanwhile, if we have already
 obtained convergence at $t=a$ for some $0<a\le T$,  it would be a waste of
 computation to return to $t = 0$ for  the next iteration.

 Thus, it is preferable  to partition the entire time
 domain into several smaller subintervals, obtain a convergent solution for one subinterval, and then  move forward to the next.
A simple concept of time-domain partition was already introduced
in~\cite{DihlmannM:11a} in the context of the standard
POD-Galerkin method.
 As opposed to time domain partition, space domain partition
\cite{FarhatC:12a} and parameter domain partition
\cite{PateraAT:10a} also have be devolved to construct local
reduced models using partial snapshots from a precomputed
database.
 In this section, we combine
the idea of time domain partition with SIRM and propose a local
SIRM algorithm for model reduction. For each subinterval, the
resulting method constructs a convergent sequence of approximating
trajectories solved in subspaces of significantly lower
dimensionality. Convergence analysis and the relation of the presented
method with some other numerical schemes are also discussed. Then,
we demonstrate its effectiveness in an example of the
Navier--Stokes simulation of a lid-driven cavity flow problem.


\subsection{Algorithm of Local SIRM}\label{sec:alllp}
Suppose the entire time domain $J:=[0, T]$ is partitioned into $M$
smaller subintervals $J_1, ... , J_M$ with $J_i:=[t_{i-1},t_{i}]$.
We slightly abuse the notation and  denote the subinterval index
by the subscript $i$. Let $t_0=0$ and $t_{M}=T$, such that $J =
\cup _{i = 1}^M{J_i}$. At subinterval $J_i$, the local solution
trajectory approximately resides in a linear subspace $S_i$
 spanned by column vectors in $\Phi_i$. Let $\Phi_i$ be orthonormal;
 then  the reduced
equation formed by the Galerkin projection is given by
\begin{equation}\label{localgalerkin}
\dot z = \Phi_i^T f(t,\Phi_i z)
\end{equation}
for $t\in J_i$.  The SIRM method can be applied to approach a locally
invariant subspace and obtain a convergent solution $x(t)$ for
this subinterval. Specifically, we initially set a trial solution,
$\hat x^0(t)$ for $t\in J_i$. During iteration $j$, an extended
data ensemble, $\hat Y^{j-1}_i$, which contains a small number of
snapshots within the subinterval, is constructed and then served
to generate the empirical eigenfunctions of the
 subspace to be used in the next iteration
cycle.  After locally projecting the full model onto this subspace
and constructing a reduced model through (\ref{localgalerkin}),
the time integration is carried out in a low-dimensional space to
obtain
  an updated approximate solution. Once sufficient accuracy is achieved, one can move forward to the next subinterval.

 Suppose a convergent solution ${{{\hat x}}(t)}$  for   subinterval $J_{i-1}$ is
 obtained by the SIRM method. Then,  the ending state of $J_{i-1}$, $\hat x(t_{i-1})$, is the
 starting state of the next subinterval $J_{i}$.
 There are several options to estimate the trial solution
 $\hat x^0(t)$ for $t\in J_i$, and we just list a few here.
 One can simply set the trial solution  as a constant, which
 means $\hat x^0(t) = \hat x(t_{i-1})$ for $t \in J_{i}$ (although this is  inaccurate).
Alternatively, a coarse model can be used to obtain a rough
estimation of ${{\hat x}^0}(t)$. These two methods can also be
used for SIRM, as discussed in the previous section. The third
option is to use the time history of the solution trajectory to
obtain an initial estimation of the invariant subspace. Similar to
\cite{MarkovinovicR:06a,RapunM:10a}, one can assume that  the
solution for  subinterval $J_i$ approximately resides in the
invariant subspace of the previous subinterval. Thus, a set of
empirical eigenfunctions can be generated by SVD of the state
matrix or the information matrix formed by  snapshots in
$J_{i-1}$. Especially, if only the starting snapshot and the
ending snapshot are used to construct the initial information
matrix, we have
\begin{equation}\label{ab2}
\hat Y_i^0 = [\hat x (t _{i - 2}), \hat x (t _{i - 1}),\gamma f({t_{i -
2}},\hat x (t _{i - 2})),\gamma f({t_{i - 1}},\hat x (t _{i - 1}))].
\end{equation}
After projecting the full model onto this subspace, we can
calculate the trial solution for $t\in J_{i}$. Since we do not
have snapshots for $t<0$, the time-history-based initialization
cannot be used for the first subinterval.

 After obtaining a trial solution for a subinterval, SIRM is used
to obtain a better approximation of the actual solution. When the width of a
subinterval is small enough, the reduced equation has a
significantly lower dimension. Let
 $m$  denote the  number of sampling snapshots  in the whole
 trajectory and  $m'$ denote the number of sampling snapshots  within
 one time interval. For each $i$, both $\hat x(t_{i-1})$ and $\hat x(t_{i})$ are
 sampled for the extended data ensemble. Thus, $m=(m'-1)\times M+1$.
  If $m'=2$,  the information matrix
 $\hat Y_{i}^j$
can be constructed from snapshots at $t_{i-1}$ and $t_{i}$,
\begin{equation}\label{am2}
\hat Y_i^j = [\hat x (t _{i - 1}),\hat x^j({t_i}), \gamma f({t_{i -
1}},\hat x (t _{i - 1})),\gamma f({t_i},\hat x^j({t_i}))].
 \end{equation}
Then,   $\Phi^j_i$ can be constructed by the SVD.   When $m'$ is
small enough, say, $m'\le 5$, there is no need to further reduce
dimensions from $\hat Y_i^j$. Instead, $\Phi^j_i$ can be computed more efficiently
by the Gram--Schmidt process.  Algorithm \ref{llpalgorithm}
represents the complete process of the local SIRM method.

\medskip
\begin{algorithm}
\caption{Local SIRM} \label{llpalgorithm}
\begin{algorithmic}
\REQUIRE The initial value problem (\ref{general}).  \ENSURE An
approximate solution $\hat x(t)$.
 \STATE Divide the whole time
domain into smaller subintervals $J_1, ... , J_M$.

\FOR {subinterval $i$}
 \STATE Set  a test function $\hat x^0(t)$ as the
trial solution. \STATE Obtain a local solution by SIRM.
     \ENDFOR
\end{algorithmic}
\end{algorithm}

Using the formula (\ref{costrom}), we can obtain the computational
complexity for Algorithm \ref{llpalgorithm}. Table
\ref{tab:compare} illustrates the complexity of the full model,
the (global) SIRM method, and the local SIRM method.
Compared with the full model, the SIRM and local SIRM methods are more efficient only when the following conditions are satisfied: (1) The
standard POD-Galerkin approach is significantly faster than the
original model, and (2) the number of sampling
points $m$ is much smaller than the total number of time steps
$N_T$.

Next, we compare the computational complexity of SIRM and its variant, local SIRM.
 In order to achieve the same level of accuracy for the same problem,
 the number of iterations needed, $N_I$, for Algorithm \ref{alg:SIRM} is
 usually much greater than the average number of iterations needed, $N_I'$,
 for one subinterval of Algorithm \ref{llpalgorithm}. In addition, since $m \simeq Mm'$, we can assume $k \simeq
Mk'$. Although there is no general formula for $\beta(k,n)$, we
may expect that it is at least a linear function of $k$, and
therefore $\beta(k,n)\ge M\beta(k',n)$ holds. In fact, SIRM can be
considered   a special case of the local SIRM method where
$M=1$, and the local reduced model offers more flexibility to
choose a subspace dimension. Furthermore, the unit step of
(\ref{localgalerkin}) could be the same as the unit step of the full
model $\delta t$ when computing the time integration. Suppose  SVD or the time integration
plays a dominant role in determining the complexity of Algorithm
\ref{alg:SIRM};   a local reduced model can obtain at least $M$
times speedups.

\begin{table}[htbp]
\begin{center}
\caption{ Complexity of the full model,  SIRM, and local SIRM using
 implicit schemes for time integration}
  \label{tab:compare}
\begin{tabular}{|l|l|}
\hline
 Full model   & $ N_T\cdot\left(b\gamma(n)/5+b^2n/20\right)$\\
  \hline
  SIRM  & $N_I\cdot\left(4m^2n+\beta (k,n)+N_T\cdot\left(k \hat \gamma
(k,n)/5+k^3/15\right)+m\gamma(n)+mnk\right)$\\
  \hline
  Local SIRM  &  $N_I'\cdot\left(4mm'n+M\beta (k',n)+N_T\cdot\left(k' \hat \gamma
(k',n)/5+k'^3/15\right)+m\gamma(n)+mnk'\right)$\\
 \hline
\end{tabular}
\end{center}
\end{table}

Based on the aforementioned complexity analysis, we discuss some heuristics for  some parameter choice strategies of the local SIRM method. Although the selection of  $m$ has some flexibility, a good choice of $m$ should balance  accuracy and computational speed. Once $m$ is determined, in order to generate
maximal speedups for one iteration, each subinterval contains a small number of sampling snapshots, say, $m'=2$ or $m'=3$. Numerical study in section 4.3 indicates that if $m$ remains constant, a large $m'$ value cannot significantly increase the accuracy for the lid-driven cavity flow problem.
If the Gram--Schmidt process is used to form a set
of orthonormal  eigenvectors, the dimension of each local subspace, $k'$, can be directly determined
by  $m'$.  Usually, $k'=2m'$ if the solution trajectory is represented by one curve.
 The multiplier 2 stems  from the fact that  the information matrix contains both state vectors and  their
corresponding tangent vectors. For the lid-driven cavity flow problem, since the solution  involves both $\psi$ and $\omega$, we have $k'=4m'$.

\subsection{Convergence Analysis}\label{sec:anllp}
We have already shown the capability of SIRM to effectively
approach a globally invariant subspace for a dynamical system, and
thus generate a sequence of functions that converges to the actual
solution. As an extension of SIRM, local SIRM generates a set of
local invariant subspaces and obtains corresponding local
solutions. The union of all these local solutions forms a full
trajectory for the original system.

We begin here with the first subinterval. In an ideal situation,
$x_0 \in S_1^j$, while the state and the vector field satisfy
$\hat x^{j-1}(t) \subset S_1^j$ and $f(t,\hat x^{j-1}(t)) \subset
S_1^j$ for all $t\in J_1$, respectively. As Theorem
\ref{globalconvergence} indicates, the sequence $\{\hat x^j(t)\}$ generated by the local SIRM method approaches $x(t)$
for $t\in J_1$. If the vector field is
Lipschitz, then the local solution to (\ref{general}) on subinterval
  $J_2$ continually depends on the initial condition $x(t_1)$.
For this reason, starting from $\hat x (t_{1})$, we can obtain a sequence
of functions that converges to $x(t)$ for $t\in J_2$. We can then
move forward to the rest of the subintervals and achieve the
following theorem.
\medskip

\begin{theorem}[{\rm  convergence of local SIRM}]\label{llpconvergence}
 Consider solving the initial value problem
\emph{(\ref{general})} by local SIRM for the time domain $J=[0,
T]$, which is partitioned into $M$ smaller subintervals $J_1, ...
, J_M$ with $J_i:=[t_{i-1},t_{i}]$. Suppose $f(t,x)$ is a locally
Lipschitz function of $x$ and a continuous function of $t$  for
all $(t, x) \in J\times \mathcal{D'}$, where $\mathcal{D'}$ is an
open set that contains $x(t)$ for all $t\in J$. For subinterval
$J_i$, the SIRM method is applied to obtain an approximation for the
local solution. Let $x(t)$ be the local solution of the full
model,  and let $\hat x^j(t)$ be the solution of the reduced model at
iteration $j$. For each iteration, the reduced subspace $S^j_i$
contains $\hat x (t_{i-1})$.
Furthermore, the vector field satisfies $f(t,\hat x^{j-1}) \subset
S^j_i$ for all $t\in J_{i}$. Then, for all $i \in \{1,...,M\}$ and
$t \in J_i$, the sequence of functions $\{\hat x^j(t)\}$ uniformly
converges to $x(t)$.
\end{theorem}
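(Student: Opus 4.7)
The natural strategy is induction on the subinterval index $i$, with the base case and inductive step both being direct invocations of Theorem~\ref{globalconvergence}. For the base case $i=1$, I would apply Theorem~\ref{globalconvergence} to the initial value problem \eqref{general} restricted to $J_1=[0,t_1]$, with initial condition $x_0$. The hypotheses transfer immediately: $f$ is locally Lipschitz on $J_1\times\mathcal{D}'$, each reduced subspace $S_1^j$ contains $\hat x(t_0)=x_0$ by assumption, and $f(t,\hat x^{j-1})\in S_1^j$ holds for $t\in J_1$. Hence the sequence $\{\hat x^j(t)\}$ produced on $J_1$ converges uniformly on $J_1$ to $x(t)$.

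For the inductive step, suppose the statement holds for subintervals $J_1,\ldots,J_{i-1}$. Interpreting ``convergent solution on $J_{i-1}$'' as the limit function $\hat x(t):=\lim_{j\to\infty}\hat x^j(t)$, the inductive hypothesis gives $\hat x(t)=x(t)$ for every $t\in J_{i-1}$; in particular, $\hat x(t_{i-1})=x(t_{i-1})$. On $J_i$ the local SIRM iteration therefore amounts to applying the global SIRM algorithm to the initial value problem
\begin{equation*}
\dot y=f(t,y),\qquad y(t_{i-1})=x(t_{i-1}),
\end{equation*}
considered over $J_i$. The theorem's standing assumptions provide exactly what Theorem~\ref{globalconvergence} requires on this shifted subinterval: the reduced subspace $S_i^j$ contains the starting state $\hat x(t_{i-1})=x(t_{i-1})$, so the initial error $e^j(t_{i-1})=0$; the invariance condition $f(t,\hat x^{j-1})\in S_i^j$ holds for all $t\in J_i$; and $f$ remains locally Lipschitz on $J_i\times\mathcal{D}'$ with $x(t)\in\mathcal{D}'$ for $t\in J_i\subset J$. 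Theorem~\ref{globalconvergence} then yields uniform convergence of $\{\hat x^j(t)\}$ to $x(t)$ on $J_i$, completing the induction.

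The main conceptual obstacle, and the only place where care is really needed, is the interpretation of the handoff between subintervals. Each subinterval produces an infinite sequence of iterates, and the starting state for $J_i$ is the limit endpoint $\hat x(t_{i-1})$ coming from $J_{i-1}$; strictly speaking the procedure is a doubly-indexed process that converges in the nested-limit sense. One must therefore formulate the inductive hypothesis as a statement about the limit function on $J_{i-1}$, so that the equality $\hat x(t_{i-1})=x(t_{i-1})$ is available before Theorem~\ref{globalconvergence} is reapplied. Once this is made explicit, no new estimates are required, since finiteness of $M$ means that the induction terminates after finitely many steps and the concatenation of uniformly convergent sequences on the $J_i$'s gives the claimed convergence on the whole of $J$. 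The local Lipschitz continuity of $f$ on the tube $\mathcal{D}'$ around the exact trajectory is exactly what guarantees that the compatibility of adjacent subintervals is preserved.
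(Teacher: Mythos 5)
Your proposal is correct and follows essentially the same route as the paper, which gives no formal proof but sketches exactly this argument in the paragraph preceding the theorem: apply Theorem~\ref{globalconvergence} subinterval by subinterval, starting from $J_1$ and propagating forward through the finitely many $J_i$. The only cosmetic difference is in the handoff at $t_{i-1}$ — the paper invokes continuous dependence of the local solution on the initial condition to justify restarting from $\hat x(t_{i-1})$, while you resolve the same issue by interpreting the starting state as the nested limit so that $\hat x(t_{i-1})=x(t_{i-1})$ exactly; both devices serve the same purpose.
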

\medskip

Finally, it is interesting to consider the local SIRM method as a
generalization of many current time integration schemes. We can
again consider $m'=2$ as an example. The
time-history initialization provides a linear subspace spanned by
$\hat Y_i^0$ to estimate the trial solution for $t\in J_i$.
Especially,  when $t=t_i$, the initial estimation of the state
vector is given by
\begin{equation} \hat
x^0({t_i}) = \hat Y_i^0 \cdot \varsigma _i^0,
\end{equation}
where $\hat Y_i^0$ is given by (\ref{ab2}) and $\varsigma _i^0$ is
a  vector that contains four elements. Suppose the  width of each
subinterval equals $\delta T$ and the width of one time step of
integration equals $\delta t$. As  $\delta T \to \delta t$, local
SIRM degenerates to the two-step Adams--Bashforth scheme if
\begin{equation*}
\varsigma _i^0=\left [0, 1, -\frac{\delta t}{2\gamma },
\frac{3\delta t}{2 \gamma} \right]^T.
\end{equation*}
 On the other hand, if one uses the SIRM method to obtain a better estimation
 at $t=t_i$, the approximate solution is given by
\begin{equation}
\hat x^1({t_i}) = \hat Y_i^1 \cdot \varsigma _i^1,
\end{equation}
where it is assumed that only two snapshots are used to construct
the information matrix $\hat Y_i^1$, as expressed by (\ref{am2}).
As $\delta T \to \delta t$, local SIRM degenerates to the
Crank--Nicolson scheme if
\begin{equation*}
\varsigma _i^1=\left [1, 0, \frac{\delta t}{2\gamma}, \frac{\delta t}{2
\gamma } \right]^T.
\end{equation*}
More generally, suppose $m'$ snapshots are sampled from the
previous subinterval. Then, as  $\delta T \to \delta t$, the
time-history initialization can degenerate to the $m'$-step
Adams--Bashforth method if proper coefficients are set for
$\varsigma _i^0$. In addition, if the $Y_i^j$ has $m'+1$ snapshots
from $J_i$, and the first $m'$ snapshots are overlapping with
$J_{i-1}$, then each iteration defined by SIRM can degenerate to
the $m'$-step Adams-Moulton  method. Furthermore, if $\delta T
=m'\delta t$, then each iteration defined by SIRM is a generalized
form of the $m'$-order Runge--Kutta  method with variable
coefficients.

 However, as a manifold learning approach, local SIRM  applies reduced models to determine the
coefficient values for each subinterval. This is more flexible
than a common scheme for   time integration because the latter
uses predesigned coefficients for each column  of the information
matrix. Therefore, the local SIRM method has the ability to
provide more stable results for a fixed time interval. Even if
$\delta T \gg \delta t$, local SIRM can still generate stable
results with high accuracy. In the next subsection, the local SIRM
approach is applied to a lid-driven cavity flow problem.

\subsection{Cavity Flow Problem}\label{sec:CFllp}
Consider a lid-driven cavity flow problem in a rectangular domain
$\Omega=[0,1] \times [0,1] $. The space domain is fixed in time.
Mathematically, the problem can be represented in terms of the
stream function $\psi$ and vorticity $\omega$ formulation of the
 incompressible Navier--Stokes equation.  In nondimensional form,  the
governing equations are given as
\begin{equation} \label{sf}
\psi_{xx} + \psi_{yy} =  - \omega ,
\end{equation}\vspace{-5mm}
\begin{equation} \label{vt}
\omega_t =  - \psi_y  \omega_x + \psi_x\omega_y +
\frac{1}{\rm{Re}}\left( \omega_{xx} + \omega_{yy} \right),
\end{equation}
where Re is the Reynolds number and $x$ and $y$ are the
Cartesian coordinates. The velocity field is given by $u=\partial
\psi /\partial y$, $v=-\partial \psi /\partial x$.
No-slip boundary conditions are applied on all nonporous walls including the top wall moving at speed $U=1$. Using Thom's formula~\cite{ThomA:33a}, these conditions are, then, written in terms of stream function and vorticity. For example on the top wall one might have
\begin{equation}
\psi_B=0,
\end{equation}
\begin{equation}
\omega_B=\frac{-2\psi_{B-1}}{h^2}-\frac{U}{h},
\end{equation}
where  subscript $B$ denotes points on the moving wall, subscript $B-1$ denotes points adjacent to the moving wall, and $h$ denotes grid spacing. Expressions for $\psi$ and $\omega$ at remaining walls with $U=0$ can be obtained in an analogous manner.
 The initial condition is set
as $u(x,y)=v(x,y)=0$. The discretization is performed on a uniform
mesh with   finite difference approximations. For the time integration of
(\ref{vt}), the implicit Crank--Nicolson scheme is applied
 for the diffusion term, and the explicit two-step
Adams--Bashforth method is employed for the advection term.

In the numerical simulation, the Reynolds number is given by
Re$=1000$. The full model uses $129 \times 129$ grid points and
$\delta t= 5\times10^{-3}$ as a unit time step.   The whole time
domain, $[0, 50]$, is divided into 250 subintervals. For each
subinterval, the trial solution is obtained through a simulation
based on $33 \times 33$ coarse grid points with a unit time step
of $4\delta t$. The same discretization scheme  is applied for the
coarse model. Thus,  the coarse model can cost less than  $1/64$ of the required operations in the full model.
 A sequence of functions defined by local SIRM is
used to approach the local solution.

\begin{figure}
\begin{center}
\begin{minipage}{0.48\linewidth} \begin{center}
\includegraphics[width=1\linewidth]{./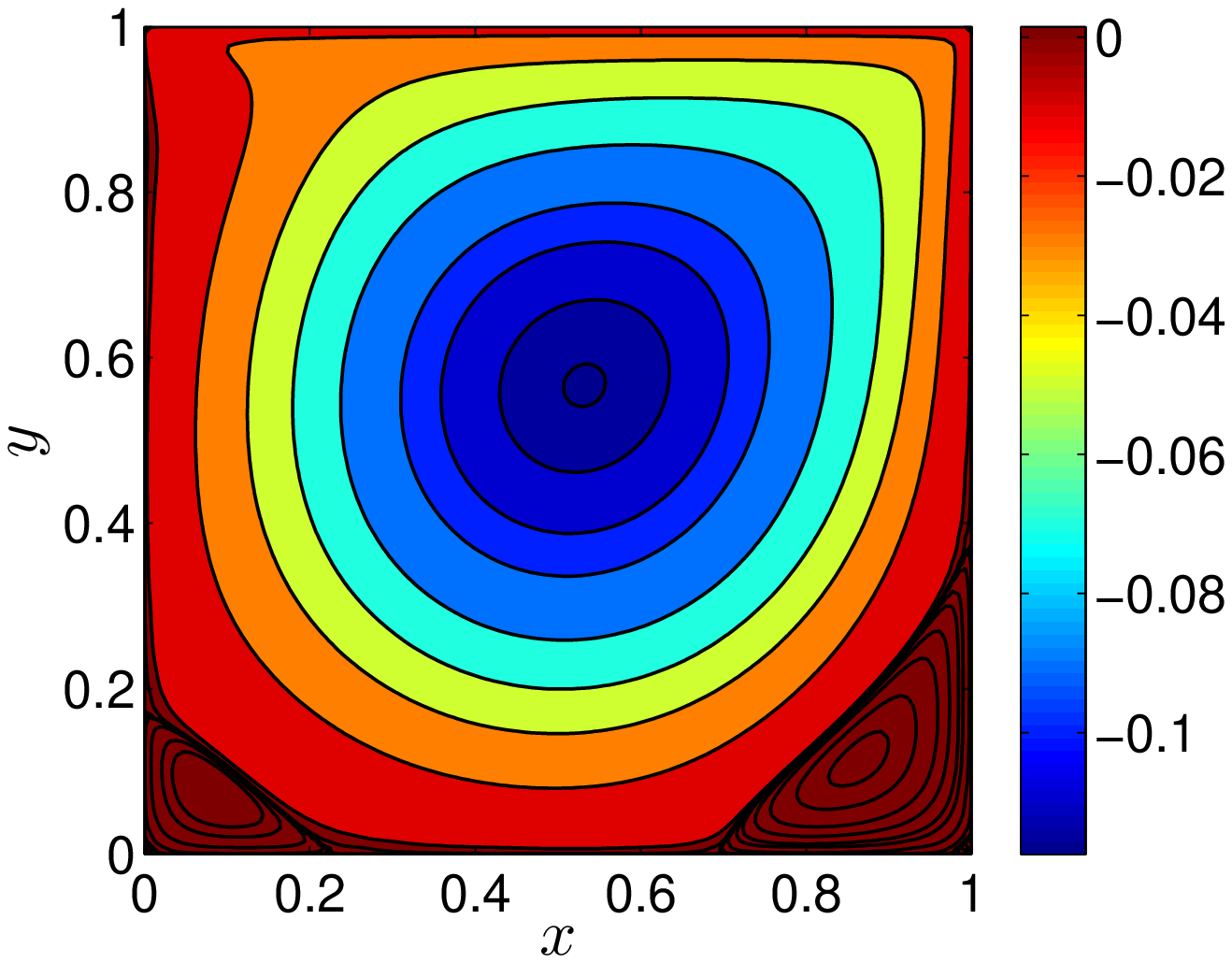}
\end{center} \end{minipage}
\begin{minipage}{0.48\linewidth} \begin{center}
\includegraphics[width=1\linewidth]{./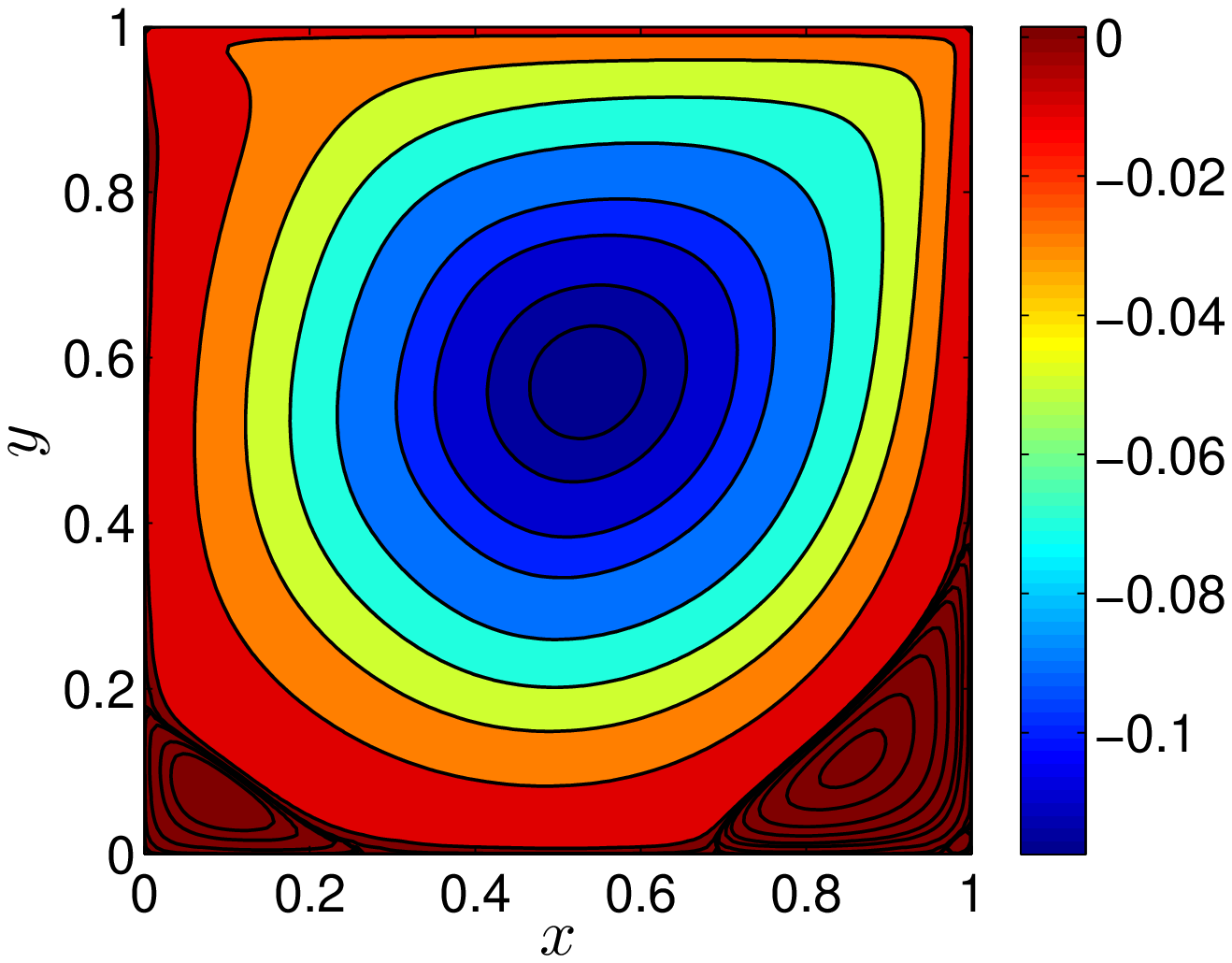}
\end{center} \end{minipage}
\begin{minipage}{0.48\linewidth}\begin{center} (a) \end{center}\end{minipage}
\begin{minipage}{0.48\linewidth}\begin{center} (b) \end{center}\end{minipage}
\caption{ Streamline pattern for driven cavity problem with
Re=$1000$.  (a) The full model uses  $129 \times 129$ grid points.
(b) The approximating result obtained through local SIRM. The
whole time domain, $[0, 50]$, is partitioned into 250
subintervals. For each subinterval, a trial solution is calculated
from $33 \times 33$ grid points. An average of five iterations are
used to achieve a better approximation. We plot the contours of
$\psi$ whose values are $-1\times 10^{-10}$, $-1\times 10^{-7}$,
$-1\times 10^{-5}$, $-1\times 10^{-4}$, $-0.01$, $-0.03$, $-0.05$,
$-0.07$, $-0.09$, $-0.1$, $-0.11$, $-0.115$, $-0.1175$, $1\times
10^{-8}$, $1\times 10^{-7}$, $1\times 10^{-6}$, $1\times 10^{-5}$,
$5\times 10^{-5}$, $1\times 10^{-4}$, $2.5\times 10^{-4}$,
$1\times 10^{-3}$, $1.3\times 10^{-3}$, and $3\times 10^{-3}$.}
\label{fig:stream}
\end{center}
\end{figure}

The streamline contours for the lid-driven cavity flow are shown
in Figure \ref{fig:stream}. In \ref{fig:stream}(a), the full model
matches well with the numerical results from~\cite{Ghia:82a}, and
the values of $\psi$ of the contours are the same as
shown in Table III of~\cite{Ghia:82a}. Local SIRM provides an
approximate solution. The main error occurs around the vortex
center, where the contour of $\psi=-0.1175$ is missing in
\ref{fig:stream}(b).
\begin{figure}
\begin{center}
\begin{minipage}{0.45\linewidth} \begin{center}
\includegraphics[width=1\linewidth]{./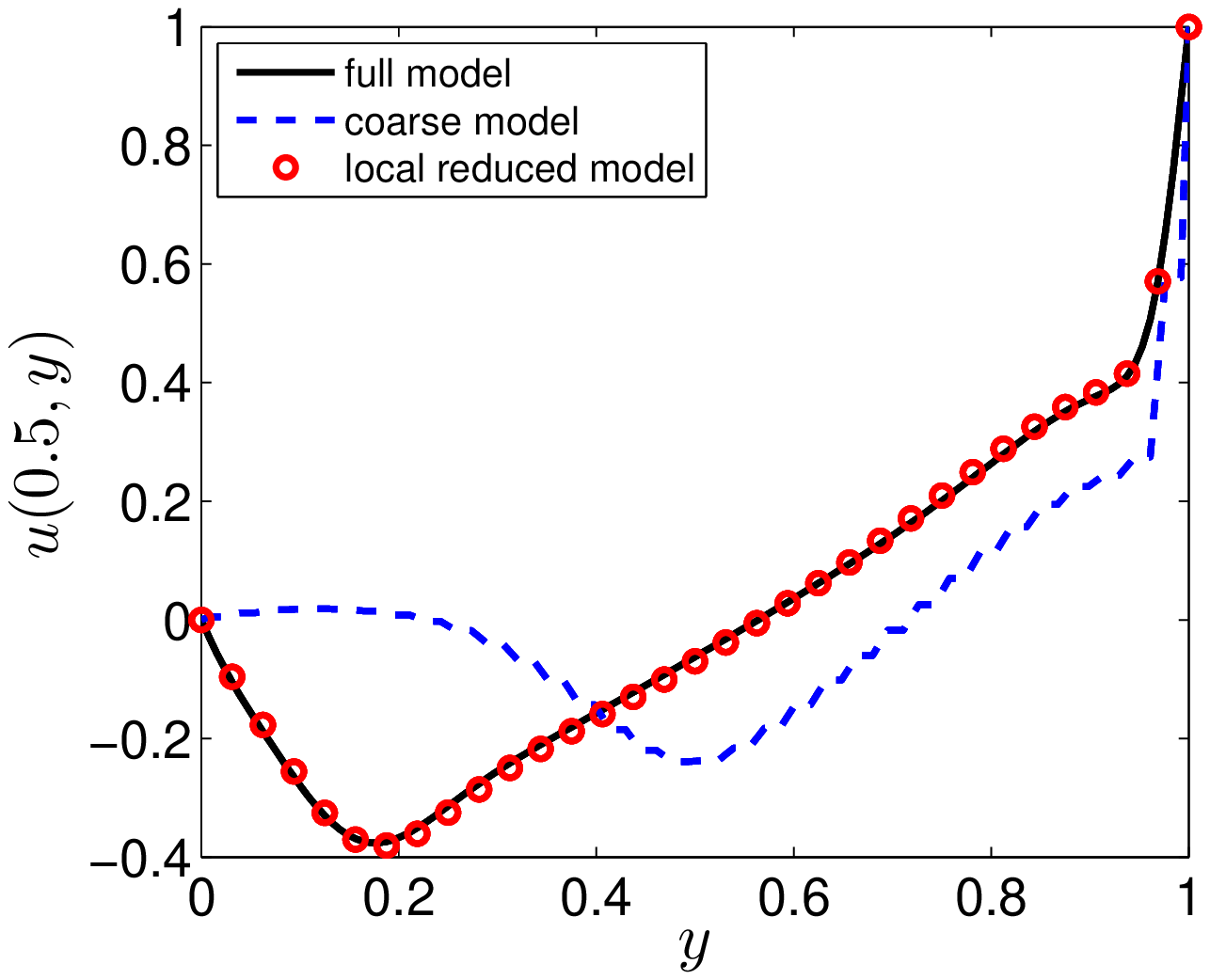}
\end{center} \end{minipage}
\begin{minipage}{0.45\linewidth} \begin{center}
\includegraphics[width=1\linewidth]{./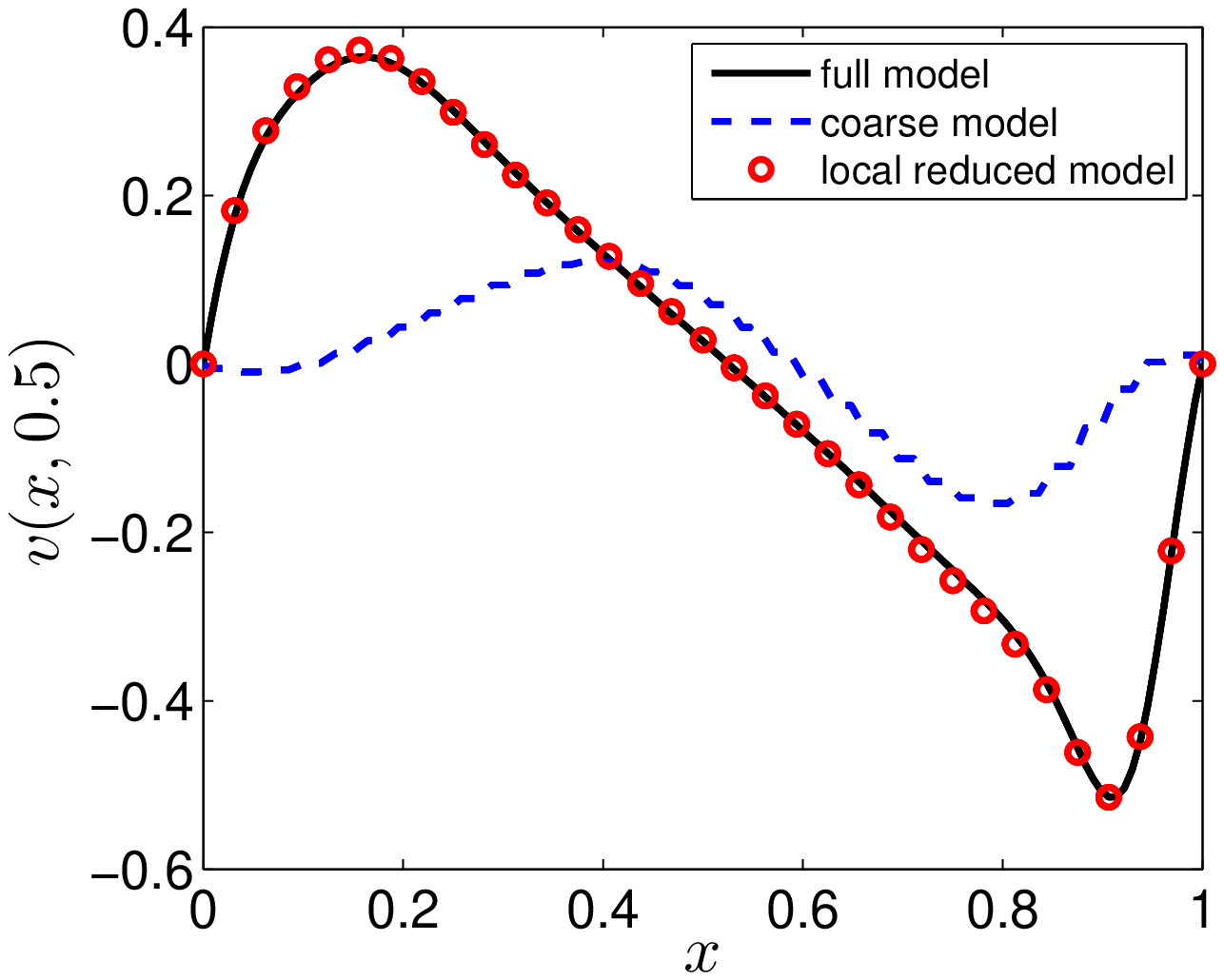}
\end{center} \end{minipage}
\begin{minipage}{0.48\linewidth}\begin{center} (a) \end{center}\end{minipage}
\begin{minipage}{0.48\linewidth}\begin{center} (b) \end{center}\end{minipage}
\caption{ (a) Comparison of the velocity component $u(x=0.5, y)$
along the $y$-direction passing the geometric center between the
full model, the coarse model, and the local SIRM method at $t=50$.
(b) Comparison of the velocity component $v(x, y=0.5)$ along the
$x$-direction passing geometric center between the full model, the coarse model,
and the local SIRM method at $t=50$. } \label{fig:velocity}
\end{center}
\end{figure}

\begin{figure}
\begin{center}
\begin{minipage}{0.48\linewidth} \begin{center}
\includegraphics[width=1\linewidth]{./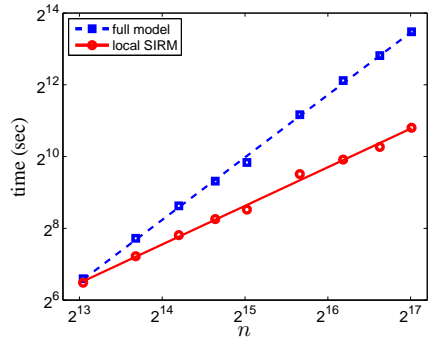}
\end{center} \end{minipage}\\
\caption{   Comparison  of the computational time
of the full model and the local SIRM method for the lid-driven
cavity flow problem. As the resolution increases from $65\times
65$ to $257\times 257$, the dimension of the full system, $n$,
increases from $2\times 65^2$ to $2\times 257^2$. Using a log-log plot,
 the asymptotic complexity can be determined by the linear regression coefficient.}
\label{fig:time}\vspace{-3mm}
\end{center}
\end{figure}


Figure \ref{fig:velocity} shows the velocity profiles for $u$
along the vertical line and $v$ along the horizontal line passing
through the geometric center of the cavity.  The coarse model
provides a trial solution, which significantly deviates from the
actual one. Then,  local SIRM  is used to obtain much more
accurate results. For each iteration, three snapshots and their
corresponding tangent vectors are used to form the information
matrix.  Instead of  POD, the Gram--Schmidt process is
applied here to form a set of orthonormal empirical eigenfunctions.
Since the local data ensemble contains both $\psi$ and $\omega$ as well as the associated tangent vectors, the
subspace dimension  is $12$.
An average of five
iterations are carried out to obtain a local convergent solution
for each subinterval.

  Since an explicit scheme is used
for the advection term, the CFL condition, $u \delta t/ \delta x+v \delta t/ \delta y \le
1$  is a necessary condition for stability. Therefore, if the
number of grid points increases from $65\times 65$ to $257 \times
257$, the unit time step decreases from $10^{-2}$ to $2.5\times
10^{-3}$ accordingly. Accounting for this, the asymptotic computational
complexity of the full model for the entire time domain is no less than
$\mathcal{O}(n^{1.5})$.   The above analysis only focuses on the
 advection term. Since the diffusion term uses an implicit
 scheme, there is no extra limit to the unit time step for the stability requirement. However,  a large  $n$ will lead to a
 slower convergence for many iterative methods, such as the successive over-relaxation method or the conjugate gradient method. Thus,
  $\mathcal{O}(n^{1.5})$ provides only a low bound
 estimation for the full model.

Since the Navier--Stokes
equation  contains only linear and quadratic terms, the complexity
of the reduced model constructed by the Galerkin projection for
one-step integration does not explicitly depend on $n$.
Moreover, the computational complexity of all the other terms  of local SIRM  in Table \ref{tab:compare}
 depends at most linearly on $n$.  Thus, we may roughly estimate that the overall complexity of
 local SIRM  is $\mathcal{O}(n)$. Figure \ref{fig:time} compares the running time of the full model
and the running time of  SIRM for different resolutions in
 (\ref{sf}) and (\ref{vt}). Except $n$ and $\delta t$, all the  parameters
remain the same. The linear regression indicates that the
asymptotic complexity of the full model is $\mathcal{O}(n^{1.74})$, and
the asymptotic complexity of the reduced model is
$\mathcal{O}(n^{1.07})$ using the same scheme.


\begin{table}[htbp]
\begin{center}
\caption{The maximal $L_2$ error between the benchmark solution
and  approximate solutions solved by the local SIRM method for
different $m$ and $m'$ values.}
 \label{tab:mm}
\begin{tabular}{|l|c|c|c|c|c|}
\hline
  & $m'=2$ &  $m'=3$ & $m'=5$ & $m'=6$ & $m'=10$\\
  \hline
  $m=500$ & 0.5689 & 0.5499 & 0.5407& 0.5411 &0.5403 \\
$m=250(m'-1)$ & 1.5029 & 0.5499 & 0.2335 & 0.1930 & 0.1175 \\
 \hline
\end{tabular}
\end{center}
\end{table}

Finally, Table \ref{tab:mm} shows the  maximal $L_2$ error for the
local SIRM method using different $m$ and $m'$ values. If each
local reduced equation is solved in a larger subinterval with more modes while the total number
of sampling snapshots remains the same, there is no significant
improvement in accuracy. On the other hand, if the length of each subinterval remains the same but we sample
more snapshots, a more accurate solution
can be achieved. Thus, a good $m$ value should balance  accuracy and  cost of the reduced model, while a small $m'$ is desired for  the lid-driven cavity flow problem.

\section{ Conclusion}\label{sec:conclusion}
In this article,  a new online manifold learning framework,
subspace iteration using reduced models (SIRM), was proposed for
the reduced-order modeling of large-scale nonlinear problems where
both the data sets and the dynamics are systematically reduced.
This framework does not require prior simulations or experiments
to obtain  state vectors.  During each iteration cycle, an
approximate solution is calculated in a low-dimensional subspace,
  providing many snapshots to construct an information matrix.
The POD (SVD) method could be applied to generate a set of
empirical eigenfunctions that span a new subspace. In an ideal
case, a sequence of functions defined by SIRM uniformly converges
to the actual solution  of the original problem. This article also
discussed the truncation error produced by  SIRM  and
provided an error bound. The capability of SIRM to solve a high-dimensional system with high accuracy was demonstrated in several
linear and nonlinear equations. Moreover, SIRM could also be used
as a posterior error estimator for  other coarse or reduced
models.

In addition, the local SIRM method was developed as an extension that can reduce
the cost of  SIRM. The SIRM method is used to obtain  a
better approximate solution for each subinterval of a partitioned
time domain. Because each subinterval has  less state variation,
the associated reduced model could be small enough. The
numerical results of the nonlinear Navier--Stokes equation through
a cavity flow problem implied that the local SIRM method could
obtain significant speedups for a large-scale problem while maintaining good accuracy.

There are some interesting open questions to study in the future.
For example, since the choice of the extended data ensemble is not
unique, there might be other methods that can be used  to form an
information matrix that results in a more efficiently reduced
model. It should be noted that the POD-Galerkin approach is not
the only technique that can be used to extract the dominant modes
from an information matrix
 and to construct a reduced model. How to combine SIRM with other model reduction techniques that exhibit higher efficiency remains a topic for future research.

\bibliographystyle{siam}
\bibliography{RefA3}

\end{document}